\newtheorem{theorem}{Theorem}[section]
\newtheorem{lemma}[theorem]{Lemma}
\newtheorem{remark}[theorem]{Remark}
\newtheorem{remarks}[theorem]{Remarks}
\title[CMC hypersurfaces of Riemannian and Lorentzian groups]{On CMC hypersurfaces of Riemannian and Lorentzian groups}
\author{A. Caminha}
\address{Departamento de Matem\'atica, Universidade Federal do Cear\'a, Fortaleza,
Cear\'a, Brazil. 60455-760}
\email{caminha@mat.ufc.br}
\subjclass[2010]{Primary: 53C42; secondary: 53C30.}
\keywords{Lie groups; bi-invariant metrics; Riemannian groups; Bernstein-type problems; constant mean curvature}
\thanks{The author is partially supported by grants $302463/2010-0$ and $473618/2011-7$ of CNPq.}
\begin{document}

\begin{abstract}
In this paper, we study the geometry of a connected oriented cmc Riemannian hypersurface $M$ of a Riemannian or Lorentzian group $G$
Let $\mathfrak g$ be the Lie algebra of $G$. If $G$ is Riemannian and $M$ is compact and transversal to an element of $\mathfrak g$, 
we show that it is a lateral class of an embedded Lie subgroup of $G$; we also do this if $G$ is Lorentzian, provided $M$ has 
sufficiently large mean curvature. If $G$ is Riemannian semisimple and $M$ is compact, we prove that $M$ has degenerate Gauss map and 
minimal nullity at least $1$. We also extend the above results to the case where $M$ is complete and noncompact, in the following way: 
for a Riemannian $G$, we show that an $M$ which is minimal is either transversal to an element of $\mathfrak g$, hence stable, or has degenerate 
Gauss map and minimal nullity at least $1$; for an $M$ which is of cmc and is transversal to an element of $\mathfrak g$, if we ask the 
immersion to be proper and have bounded second fundamental form, then $M$ is also a lateral class of an embedded Lie subgroup of $G$, 
provided a certain growth condition on the size of the corresponding Gauss map is satisfied. Finally, for a Lorentzian group $G$, 
with sectional curvatures bounded from above on Lorentzian planes, we extend a result of Y. Xin, proving that a complete $M$, which is 
of cmc, is either 
totally geodesic or a hyperbolic space form, provided it is transversal to a timelike element of $\mathfrak g$, has large enough mean 
curvature and bounded hyperbolic Gauss map. We also present some examples of Riemannian and Lorentzian groups, as well as codimension one 
embedded subgroups, which are relevant to our results.
\end{abstract}

\maketitle

\section{Introduction}\label{section:Introduction}


Classical geometric Lie group theory assures that every compact and every semisimple Lie group can be turned into a 
semi-Riemannian (Riemannian, in the compact case) group, i.e., can be furnished with a bi-invariant metric tensor. However, if one stops 
for a moment and tries to search for examples of submanifolds, even hypersurfaces, of such a group, ones possessing interesting geometric 
properties related to curvature (constant mean curvature, for example), then one gets easily stuck on trivial examples, like those of 
lateral classes of Lie subgroups of such a group, furnished with the induced metric.

In this paper we give an explanation of why this is so, presenting sufficient conditions for the validity of the converse of the above 
situation in the cases of a Riemannian or Lorentzian group and a complete Riemannian cmc hypersurface of it. 

Firstly, we consider a Riemannian group $G$ and a complete connected oriented cmc hypersurface $M$ of $G$, which is transversal to an 
element of the Lie algebra of $G$. In the compact case (cf. Theorem \ref{thm:main theorem}), this suffices to show that $M$ is a lateral 
class of an embedded Lie subgroup of $G$; in the complete noncompact case (cf. Theorem \ref{thm:second theorem}), if we ask 
$\varphi$ to be proper and to have bounded second fundamental form, then we reach the same conclusion, provided a certain growth condition
on the size of the corresponding Gauss map is satisfied. We also show (cf. Theorem \ref{thm:semisimple}) that a compact connected oriented 
cmc hypersurface of a semisimple Riemannian group $G$ has degenerate Gauss map and minimal nullity at least $1$. Finally,
we extend this last result to the complete noncompact case, proving (cf. Theorem \ref{thm:stability}) that an $M$ which is minimal 
is either transversal to an element of $\mathfrak g$, hence stable, or has degenerate Gauss map and minimal nullity at least $1$. 

Secondly, we treat the case of a Lorentzian group $G$ and a complete connected oriented cmc spacelike hypersurface $M$ of $G$, 
also under the assumption of transversality with respect to an element of the Lie algebra of $G$. If $M$ is compact and has large enough mean curvature, we prove 
(cf. Theorem \ref{thm:the compact Lorentzian case}) that it is a lateral class of an embedded Lie subgroup of $G$. If $G$
has sectional curvatures of Lorentzian planes bounded from above, we prove (cf. Theorem \ref{thm:extending a theorem of Xin}) 
that a complete such $M$ is either totally geodesic or a hyperbolic space form, provided it has large enough mean curvature and bounded 
hyperbolic Gauss map.

The results of Theorems \ref{thm:main theorem} and \ref{thm:second theorem} extend, to the context of Riemannian and Lorentzian groups,
previous works of several authors (cf. \cite{Albujer:11}, \cite{Caminha:11}, \cite{Caminha:10} and the references therein)
on Berstein-type problems for complete hypersurfaces of Riemannian and Lorentzian warped products. 
Theorem \ref{thm:extending a theorem of Xin} extends, to Lorentzian groups, 
a result of Y. Xin (cf. \cite{Xin:91}) on cmc complete hypersurfaces of the Lorentz-Minkowski space.

The paper is organized in the following manner: in Section \ref{section:Riemannian groups}, we establish some notations and 
collect a few facts on Lie groups and Lie algebras that will be needed later. In Section \ref{section:Support functions on hypersurfaces},
we give a detailed account, for an oriented hypersurface of a Riemannian or Lorentzian group, of the support function in the direction of 
an element of its Lie algebra; in particular, we reobtain Theorem $1$ of \cite{Ripoll:03}, and extend it to the context of Lorentzian groups.
Finally, Section \ref{section:CMC hypersurfaces of Riemannian groups} deals with the case of cmc hypersurfaces of a 
Riemannian group, and Section \ref{section:CMC hypersurfaces of Lorentzian groups} treats the case of cmc spacelike hypersurfaces of
Lorentzian groups. Throughout, we also discuss prototype examples of the described phenomena.

\section{Semi-Riemannian groups}\label{section:Riemannian groups}

In this section, we recall some basic facts on Lie groups and Lie algebras which will be needed in the sequel, as well as fix some notations
and conventions to which we will stick to along the rest of the paper. We refer the reader to \cite{doCarmo:92},
\cite{Humphreys:97}, \cite{Knapp:02}, \cite{Lee:02} and \cite{O'Neill:83} for further discussions on the topics and results collected here.

Given an $(n+1)-$dimensional Lie group $G$, we let $\mathfrak g$ denote the corresponding Lie algebra of left invariant vector fields. For a
fixed basis $\mathcal B=(X_1,\ldots,X_{n+1})$ for $\mathfrak g$, the left invariance of $[X_i,X_j]$ assures the existence of real
numbers $c_{ij}^k$ such that
$$[X_i,X_j]=\sum_{k=1}^{n+1}c_{ij}^kX_k;$$
these numbers are said to be the structure constants of $\mathfrak g$ with respect to the basis $\mathcal B$, and clearly satisfy the relations 
$c_{ij}^k=-c_{ji}^k$, for all $1\leq i,j,k\leq n+1$; in particular, $c_{ii}^j=0$, for all such $i,j$.

We assume that $G$ is a connected Riemannian or Lorentzian group, i.e., that it is furnished with a bi-invariant metric tensor $\langle\cdot,\cdot\rangle$,
with index $\nu=0$ or $1$, respectively. 

For $X\in\mathfrak X(G)\setminus\{0\}$, we define the sign $\epsilon_X$ of $X$ as 
$$\epsilon_X=\text{sign}(\langle X,X\rangle)=\frac{\langle X,X\rangle}{|\langle X,X\rangle|}.$$
Therefore, the norm $|X|$ of $X$ is given by
$$|X|=\sqrt{\epsilon_X\langle X,X\rangle}.$$

Application of the Gramm-Schmidt algorithm to $\mathfrak g$ allows us to take the basis $\mathcal B$ of $\mathfrak g$ to be orthonormal, i.e., 
such that
$$\langle X_i,X_j\rangle=\epsilon_j\delta_{ij},$$
for $1\leq i,j\leq n+1$, where $\epsilon_j=\epsilon_{X_j}$ for all such $j$.

The bi-invariance of $\langle\cdot,\cdot\rangle$ gives us Weyl's relation
\begin{equation}\label{eq:Weyl's relation}
 \langle[X,Y],Z\rangle=\langle X,[Y,Z]\rangle,
\end{equation}
for all $X,Y,Z\in\mathfrak g$. By making the substitutions $X=X_i$, $Y=X_j$ and $Z=X_k$ at it, we get the equalities
\begin{equation}\label{eq:structure constants}
c_{ij}^k\epsilon_k=c_{jk}^i\epsilon_i,
\end{equation}
for all $1\leq i,j,k\leq n+1$; these, in turn, imply
$$c_{ij}^i\epsilon_i=-c_{ji}^i\epsilon_i=-c_{ii}^j\epsilon_j=0$$
and, hence,
\begin{equation}\label{eq:structure constants again}
c_{ij}^i=c_{ii}^j=0,
\end{equation}
for all $1\leq i,j\leq n+1$.

If $\tilde\nabla$ denotes the Levi-Civita connection of $G$ with respect to $\langle\cdot,\cdot\rangle$, it is a standard fact that
\begin{equation}\label{eq:Levi-Civita connection I}
\tilde\nabla_XY=\frac{1}{2}[X,Y],
\end{equation}
for all $X,Y\in\mathfrak g$. 

Back to the orthonormal basis $\mathcal B$, take a general $V\in\mathfrak X(G)$, say $V=\sum_{i=1}^{n+1}\alpha_iX_i$. Then,
$\alpha_i=\epsilon_i\langle V,X_i\rangle$ and, from the above,
\begin{equation}\label{eq:Levi-Civita connection II}
\tilde\nabla_VX_j=\sum_{i=1}^{n+1}\alpha_i\tilde\nabla_{X_i}X_j=\frac{1}{2}\sum_{i,k=1}^{n+1}\epsilon_i\langle V,X_i\rangle c_{ij}^kX_k.
\end{equation}

Concerning curvature, if $R_G:\mathfrak X(G)\times\mathfrak X(G)\times\mathfrak X(G)\rightarrow\mathfrak X(G)$ stands for the curvature tensor
of $G$, it is also standard that
\begin{equation}\label{eq:curvature tensor}
R_G(X,Y)Z=-\frac{1}{4}[[X,Y],Z],
\end{equation}
for all $X,Y,Z\in\mathfrak g$. If $X$ and $Y$ are orthonormal and span a nondegenerate $2-$plane in $\mathfrak g$, let $K_G(X,Y)$ 
denote the corresponding sectional curvature of $G$; then, it follows easily from (\ref{eq:Weyl's relation}) and (\ref{eq:curvature tensor}) 
that
\begin{equation}\label{eq:sectional curvature}
K_G(X,Y)=\frac{1}{4}\epsilon_X\epsilon_Y\epsilon_{[X,Y]}|[X,Y]|^2.
\end{equation}

For a Lie group $G$, the adjoint representation of its Lie algebra $\mathfrak g$ is the liner map
$\text{ad}:\mathfrak g\to\text{End}(\mathfrak g)$, such that
$$\text{ad}(X)(Y)=[X,Y].$$
The Killing form of $\mathfrak g$ is the symmetric bilinear form $B:\mathfrak g\times\mathfrak g\to\mathbb R$, such that
$$B(X,Y)=\text{tr}(\text{ad}(X)\text{ad}(Y));$$
it is well known that it satisfies
\begin{equation}\label{eq:Weyl's relation for Killing forms}
B([X,Y],Z)=B(X,[Y,Z]),
\end{equation}
for all $X,Y,Z\in\mathfrak g$. If $G^{n+1}$ is furnished with a bi-invariant metric tensor, then, an easy computation by using 
(\ref{eq:curvature tensor}) and an orthonormal basis $\mathcal B$ for $\mathfrak g$ gives
\begin{equation}\label{eq:Ricci tensor and Killing form}
\text{Ric}_G(X,Y)=-\frac{1}{4}B(X,Y),
\end{equation}
for all $X,Y\in\mathfrak g$.

The canonical metric tensors on the Euclidean space $\mathbb R^{n+1}$ and the Lorentz-Minkowski space $\mathbb L^{n+1}$ are obviously 
bi-invariant. On the other hand, as was pointed out at Section \ref{section:Introduction}, a theorem of H. Weyl assures that every 
compact connected Lie group can be turned into a Riemannian group.

A little more into the algebraic side, we will get another interesting class of examples of semi-Riemannian groups, due to the following 
discussion.

Given a Lie algebra $\mathfrak g$, recall that its center $\mathcal Z(\mathfrak g)$ is the set of all $X\in\mathfrak g$ such that $[X,Y]=0$, 
for all $Y\in\mathfrak g$; an ideal of $\mathfrak g$ is a Lie subalgebra $\mathfrak h$ of $\mathfrak g$ such that $[X,Y]\in\mathfrak h$, 
for all $X\in\mathfrak h$, $Y\in\mathfrak g$. A Lie algebra $\mathfrak g$ is simple if it is nonabelian and its only ideals are $\mathfrak g$ and $\{0\}$, and semisimple if it 
is equal to a direct sum of simple subalgebras. The classical families $A_n$, $B_n$, $C_n$ and $D_n$ of matricial Lie algebras 
(cf. Chapter $1$ of \cite{Humphreys:97}) are all examples of finite dimensional semisimple Lie algebras. It is a standard fact of Lie 
algebra theory that all finite dimensional semisimple Lie algebras (and not only the simple ones) have trivial center.
A Lie group $G$ is semisimple if its Lie algebra is semisimple. On the other hand, for a given finite dimensional semisimple Lie algebra 
$\mathfrak g$, classical Lie group theory (cf. Theorem $20.20$ of \cite{Lee:02}) assures the existence of a
connected Lie group $G$ whose Lie algebra is isomorphic to $\mathfrak g$. Therefore, there are plenty of examples of semisimple Lie groups.

Our interest in semisimple Lie groups comes from Cartan's criterion for semisimplicity (cf. Theorem $1.45$ of \cite{Knapp:02}), which states 
that a Lie group $G$ is semisimple if, and only if, it has nondegenerate Killing form. Therefore, if $G$ is semisimple, it follows from
(\ref{eq:Weyl's relation for Killing forms}) and Proposition $11.9$ of \cite{O'Neill:83} that its Killing form $B$ turns $G$ into a 
semi-Riemannian group. Moreover, thanks to (\ref{eq:Ricci tensor and Killing form}), such a semi-Riemannian group is an Einstein manifold.

The following algebraic consequence of the above discussion is straightforward and will be needed at Section 
\ref{section:CMC hypersurfaces of Riemannian groups}.

\begin{lemma}\label{lemma:algebraic lemma}
 If $G^{n+1}$ is a Riemannian group with Lie algebra $\mathfrak g$, then $\mathcal Z(\mathfrak g)\neq\{0\}$ if, and only if, $G$ admits a 
connected Lie subgroup of dimension $n$.
\end{lemma}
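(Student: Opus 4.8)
The plan is to translate this geometric statement into a purely Lie-algebraic one and then exploit the bi-invariance of the metric through Weyl's relation (\ref{eq:Weyl's relation}). The key reduction is the standard correspondence between connected Lie subgroups of $G$ and Lie subalgebras of $\mathfrak g$ (cf. \cite{Lee:02}): every $n$-dimensional subalgebra $\mathfrak h\subset\mathfrak g$ is the Lie algebra of a unique connected Lie subgroup of $G$, and conversely an $n$-dimensional connected Lie subgroup has an $n$-dimensional Lie subalgebra. Hence $G$ admits a connected Lie subgroup of dimension $n$ if and only if $\mathfrak g$ possesses a codimension-one Lie subalgebra, and it suffices to prove that $\mathcal Z(\mathfrak g)\neq\{0\}$ is equivalent to the existence of such a subalgebra.

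For the forward implication I would pick a nonzero $Z\in\mathcal Z(\mathfrak g)$ and set $\mathfrak h=Z^\perp$, which is a hyperplane since the metric is nondegenerate. To see that $\mathfrak h$ is a subalgebra, I take $X,Y\in\mathfrak h$ and use Weyl's relation (\ref{eq:Weyl's relation}) to compute $\langle[X,Y],Z\rangle=\langle X,[Y,Z]\rangle=0$, the last equality holding because $Z$ is central; thus $[X,Y]\in Z^\perp=\mathfrak h$, and $\mathfrak h$ is the desired codimension-one subalgebra.

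For the converse, suppose $\mathfrak h$ is a codimension-one subalgebra and let $N$ be a unit vector realizing the orthogonal decomposition $\mathfrak g=\mathfrak h\oplus\mathbb R N$. I claim $N\in\mathcal Z(\mathfrak g)$; since $[N,N]=0$, it is enough to show $[Y,N]=0$ for every $Y\in\mathfrak h$. On one hand, Weyl's relation applied with $Z=N$ yields $\langle[X,Y],N\rangle=\langle X,[Y,N]\rangle$ for all $X\in\mathfrak h$, and the left-hand side vanishes because $\mathfrak h$ is a subalgebra; hence $[Y,N]\perp\mathfrak h$, i.e. $[Y,N]\in\mathbb R N$. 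On the other hand, $\langle[Y,N],N\rangle=\langle Y,[N,N]\rangle=0$, so $[Y,N]\perp N$, i.e. $[Y,N]\in\mathfrak h$. Combining the two gives $[Y,N]\in\mathbb R N\cap\mathfrak h=\{0\}$, so $N$ is central and $\mathcal Z(\mathfrak g)\neq\{0\}$.

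The computations are short, so the only genuinely delicate point is the use of the orthogonal decomposition $\mathfrak g=\mathfrak h\oplus\mathbb R N$ in the converse, which is exactly where the Riemannian (positive-definite) hypothesis is indispensable: in the Lorentzian setting a codimension-one subalgebra could be a degenerate hyperplane, so that $\mathfrak h^\perp\subset\mathfrak h$ and no such complementary unit normal $N$ exists, and the argument that the normal is central would break down. I therefore expect this orthogonality/nondegeneracy issue, rather than any computation, to be the main conceptual obstacle.
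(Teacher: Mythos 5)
Your proof is correct, and it coincides with what the paper intends: the paper states Lemma \ref{lemma:algebraic lemma} without proof, as a ``straightforward'' consequence of the preceding discussion, and your argument---reducing via the subgroup--subalgebra correspondence to the existence of a codimension-one subalgebra, then using Weyl's relation (\ref{eq:Weyl's relation}) with the orthogonal splitting $\mathfrak g=\mathfrak h\oplus\mathbb R N$ to show that $Z^{\perp}$ is a subalgebra when $Z$ is central and that the unit normal to a codimension-one subalgebra is central---is exactly the natural one. Your closing observation is also on point: positive-definiteness is precisely what guarantees $Z\notin Z^{\perp}$ and the nondegenerate splitting $\mathfrak g=\mathfrak h\oplus\mathbb R N$, which is why the lemma is stated only for Riemannian groups.
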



\section{Support functions on hypersurfaces}\label{section:Support functions on hypersurfaces}

Along this section, we will stick to the following notations: we will consider a Riemannian or Lorentzian group $G^{n+1}$ and a Riemannian 
immersion $\varphi:M^n\rightarrow G^{n+1}$, i.e., an immersion of an $n-$dimensional orientable differentiable manifold $M^n$ into $G^{n+1}$, 
such that $M$ turns into a Riemannian manifold when furnished with the induced metric. (In the case where $G$ is Lorentzian, one also calls $M$ 
a spacelike hypersurface of $G$.) We always assume that $M$ is furnished with the induced metric, and from now on we will call $M$ simply
a hypersurface of $G$. 

We orient $M$ by the choice of a unit normal vector field $N$, so that $\epsilon_N=(-1)^{\nu}$. We let $A(\cdot)=-\tilde\nabla_{(\cdot)}N$ 
denote the corresponding shape operator, and $H$ the mean curvature of $\varphi$ with respect to $N$, so that
$$H=\epsilon_N\text{tr}(A).$$ 

For $X\in\mathfrak g$, we let $f_X:M^n\rightarrow\mathbb R$ denote the support function of $M$ in the direction of $X$, i.e.,
$$f_X(p)=\langle N,X\rangle_p,$$
for every $p\in M$. Given an orthonormal basis $\mathcal B=(X_1,\ldots,X_{n+1})$ for $\mathfrak g$, and $1\leq j\leq n+1$, we let 
$f_j=f_{X_j}$.

We begin with a symmetry relation on the structure constants of $\mathcal B$ and, in the sequel, compute the gradient and the Laplacian of 
$f_j$.

\begin{lemma}\label{lemma:relation on structure constants}
$\sum_{i,j=1}^{n+1}c_{jl}^i\epsilon_jf_if_j=0$, for $1\leq l\leq n+1$.
\end{lemma}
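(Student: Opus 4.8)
The plan is to recognize the double sum as the algebraic expression of $\langle[N,X_l],N\rangle$, where $N$ is regarded pointwise as a left-invariant field, and then to invoke Weyl's relation (\ref{eq:Weyl's relation}) to see that this quantity equals its own negative. First I would fix $p\in M$ and expand the unit normal in the orthonormal basis. Since $\langle X_i,X_j\rangle=\epsilon_j\delta_{ij}$, writing $N=\sum_k\alpha_kX_k$ forces $\alpha_k=\epsilon_k\langle N,X_k\rangle=\epsilon_kf_k$, so that
$$N=\sum_{k=1}^{n+1}\epsilon_kf_kX_k,$$
with the $f_k=f_k(p)$ understood as fixed real numbers. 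I would let $\tilde N\in\mathfrak g$ denote the left-invariant field carrying these (frozen) coefficients.

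Next I would compute $\langle[\tilde N,X_l],\tilde N\rangle$ directly from the structure constants. Using $[\tilde N,X_l]=\sum_{k,i}\epsilon_kf_kc_{kl}^iX_i$ together with $\langle X_i,\tilde N\rangle=f_i$, I obtain
$$\langle[\tilde N,X_l],\tilde N\rangle=\sum_{i,k}c_{kl}^i\epsilon_kf_if_k=\sum_{i,j}c_{jl}^i\epsilon_jf_if_j,$$
which is exactly the sum in the statement. Finally, applying Weyl's relation (\ref{eq:Weyl's relation}) with $X=Z=\tilde N$ and $Y=X_l$ gives $\langle[\tilde N,X_l],\tilde N\rangle=\langle\tilde N,[X_l,\tilde N]\rangle=-\langle[\tilde N,X_l],\tilde N\rangle$, so the quantity vanishes, proving the claim.

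There is essentially no serious obstacle here: the only point requiring care is that the $f_i$ must be treated as constants at the fixed point $p$, so that $[\tilde N,X_l]$ is a genuine Lie bracket of left-invariant fields and no derivative terms intrude; the resulting identity is then purely algebraic and independent of $p$. Alternatively, one could bypass the geometric interpretation entirely and argue with the structure-constant identity (\ref{eq:structure constants}): the substitution $c_{jl}^i\epsilon_i=c_{li}^j\epsilon_j$ rewrites the sum as $\sum_{i,j}c_{li}^j\epsilon_if_if_j$, after which relabeling $i\leftrightarrow j$ together with the antisymmetry $c_{lj}^i=-c_{jl}^i$ shows the sum equals its own negative, hence is zero.
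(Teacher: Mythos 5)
Your proposal is correct, and your main argument takes a genuinely different route from the paper's. The paper proves the identity by a two-line index manipulation: relabel $i\leftrightarrow j$ and then apply (\ref{eq:structure constants}) together with the antisymmetry $c_{jl}^i=-c_{lj}^i$, concluding that the sum equals its own negative --- which is exactly the alternative you sketch in your closing sentence, so that part of your proposal coincides with the paper's proof (with the two steps in reversed order). Your primary argument instead identifies the sum conceptually: with $\tilde N=\sum_k\epsilon_kf_k(p)X_k\in\mathfrak g$ the left-invariant field agreeing with $N$ at the fixed point $p$, the double sum is $\langle[\tilde N,X_l],\tilde N\rangle$, which vanishes by Weyl's relation (\ref{eq:Weyl's relation}) applied with $X=Z=\tilde N$, $Y=X_l$ --- equivalently, by the skew-adjointness of $\mathrm{ad}(\tilde N)$ with respect to a bi-invariant metric. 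Your computations check out ($\alpha_k=\epsilon_kf_k$ from $\langle X_i,X_j\rangle=\epsilon_j\delta_{ij}$, and $\langle X_i,\tilde N\rangle=f_i$), and you correctly flag the one delicate point: the coefficients must be frozen at $p$ so that $\tilde N$ is a genuine element of $\mathfrak g$ and no derivative terms intrude; since the claimed identity is pointwise, this is harmless. As for what each approach buys: the paper's manipulation is shorter and purely formal, while your interpretation explains \emph{why} the cancellation happens (it is the identity $\langle\mathrm{ad}(X)Y,X\rangle=0$ specialized to $X=\tilde N$, $Y=X_l$), works uniformly in the Riemannian and Lorentzian cases without tracking signs $\epsilon_i$, and makes transparent why the same vanishing recurs in the later computations of Lemmas \ref{lemma:square norm of the gradient} and \ref{lemma:Laplacian of support functions}.
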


\begin{proof}
It suffices to change indices and invoke (\ref{eq:structure constants}):
$$\sum_{i,j=1}^{n+1}c_{jl}^i\epsilon_jf_if_j=\sum_{i,j=1}^{n+1}c_{il}^j\epsilon_if_jf_i=-\sum_{i,j=1}^{n+1}c_{jl}^i\epsilon_i^2\epsilon_jf_if_j=-\sum_{i,j=1}^{n+1}c_{jl}^i\epsilon_jf_if_j.$$
\end{proof}

For what comes next, for a section $X$ of $TG_{|M}$, we let $X^{\top}$ denote the orthogonal projection of $X$ onto $TM$.

\begin{lemma}\label{lemma:gradient of support functions}
$\nabla f_j=-AX_j^{\top}+\frac{1}{2}\sum_{i,l=1}^{n+1}c_{lj}^i\epsilon_lf_iX_l^{\top}$, for $1\leq j\leq n+1$.
\end{lemma}

\begin{proof}
Fix $p\in M$ and let $(e_1,\ldots,e_n)$ be an orthonormal frame field on a neighborhood of $p$ in $M$. Then, for $1\leq k\leq n$ and thanks to
(\ref{eq:Levi-Civita connection II}), we have
\begin{equation}\label{eq:derivative of support function at a fixed direction}
 \begin{split}
 e_k(f_j)&\,=\langle\tilde\nabla_{e_k}N,X_j\rangle+\langle N,\tilde\nabla_{e_k}X_j\rangle\\
&\,=-\langle Ae_k,X_j\rangle+\frac{1}{2}\sum_{i,l=1}^{n+1}\langle N,\epsilon_l\langle e_k,X_l\rangle c_{lj}^iX_i\rangle\\
&\,=-\langle Ae_k,X_j\rangle+\frac{1}{2}\sum_{i,l=1}^{n+1}\epsilon_l\langle e_k,X_l\rangle c_{lj}^if_i.
 \end{split}
\end{equation}
Hence,
\begin{equation}\nonumber
\begin{split}
 \nabla f_j&\,=\sum_{k=1}^ne_k\langle N,X_j\rangle e_k\\
&\,=-\sum_{k=1}^n\langle e_k,AX_j^{\top}\rangle e_k+\frac{1}{2}\sum_{k=1}^n\sum_{i,l=1}^{n+1}\epsilon_l\langle e_k,X_l\rangle c_{lj}^if_ie_k\\
&\,=-AX_j^{\top}+\frac{1}{2}\sum_{i,l=1}^{n+1}\epsilon_lc_{lj}^if_iX_l^{\top}.
 \end{split}
\end{equation}
\end{proof}

We shall also need the following expression for the squared norm of the gradient of $f_j$.

\begin{lemma}\label{lemma:square norm of the gradient}
\begin{equation}\nonumber
\begin{split} 
|\nabla f_j|^2&\,=|AX_j^{\top}|^2-\sum_{i,l=1}^{n+1}\langle X_l^{\top},AX_j^{\top}\rangle c_{lj}^i\epsilon_lf_i\\
&\,\,\,\,\,\,\,+\frac{1}{4}\sum_{i,l,r=1}^{n+1}c_{lj}^ic_{lj}^r\epsilon_lf_if_r.
 \end{split}
\end{equation}
\end{lemma}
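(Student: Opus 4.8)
The plan is to expand the inner product $\langle\nabla f_j,\nabla f_j\rangle$ directly from the formula for $\nabla f_j$ furnished by Lemma \ref{lemma:gradient of support functions}. Writing
$$\nabla f_j=-AX_j^{\top}+\tfrac{1}{2}\sum_{i,l=1}^{n+1}c_{lj}^i\epsilon_lf_iX_l^{\top},$$
I would break $|\nabla f_j|^2$ into three contributions: the pure shape-operator term $|AX_j^{\top}|^2$; a cross term; and the square of the second summand. The cross term is immediate, since
$$2\left\langle -AX_j^{\top},\,\tfrac{1}{2}\sum_{i,l}c_{lj}^i\epsilon_lf_iX_l^{\top}\right\rangle=-\sum_{i,l=1}^{n+1}\langle X_l^{\top},AX_j^{\top}\rangle c_{lj}^i\epsilon_lf_i,$$
using the symmetry of the inner product, and this is precisely the middle term in the statement. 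So the whole content of the lemma is packaged in evaluating the last contribution.

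For that term I would first record the orthogonal decomposition $X_l=X_l^{\top}+\epsilon_Nf_lN$, which is valid because $\langle X_l,N\rangle=f_l$ and $\epsilon_N^2=1$; pairing two such expressions and using the orthonormality $\langle X_l,X_m\rangle=\epsilon_l\delta_{lm}$ of $\mathcal B$ yields the key projection identity
$$\langle X_l^{\top},X_m^{\top}\rangle=\epsilon_l\delta_{lm}-\epsilon_Nf_lf_m.$$
Substituting this into
$$\tfrac{1}{4}\sum_{i,l,r,m=1}^{n+1}c_{lj}^ic_{mj}^r\epsilon_l\epsilon_mf_if_r\langle X_l^{\top},X_m^{\top}\rangle$$
splits the quadruple sum into a diagonal piece and a remainder. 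On the diagonal, $\delta_{lm}$ collapses the sum over $m$ and the factor $\epsilon_l^3=\epsilon_l$ produces exactly $\tfrac{1}{4}\sum_{i,l,r}c_{lj}^ic_{lj}^r\epsilon_lf_if_r$, the third term of the statement.

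The remaining piece is $-\tfrac{1}{4}\epsilon_N\bigl(\sum_{i,l}c_{lj}^i\epsilon_lf_if_l\bigr)\bigl(\sum_{r,m}c_{mj}^r\epsilon_mf_rf_m\bigr)$, a perfect square up to the sign $\epsilon_N$. The main point is to recognize that each factor vanishes: renaming the dummy indices, the factor $\sum_{i,l}c_{lj}^i\epsilon_lf_if_l$ is exactly the quantity shown to be zero in Lemma \ref{lemma:relation on structure constants} (with the free index there specialized to $j$). I expect this recognition to be the crux of the argument, since a direct reader would otherwise be left with a four-fold sum and no obvious cancellation; invoking Lemma \ref{lemma:relation on structure constants} kills the off-diagonal contribution outright and leaves precisely the three advertised terms, completing the proof.
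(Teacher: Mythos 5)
Your proposal is correct and follows essentially the same route as the paper: expand $|\nabla f_j|^2$ from Lemma \ref{lemma:gradient of support functions}, read off the cross term, use the projection identity $\langle X_l^{\top},X_s^{\top}\rangle=\epsilon_l\delta_{ls}-\epsilon_Nf_lf_s$ to split the quadruple sum, and kill the off-diagonal piece via Lemma \ref{lemma:relation on structure constants}. The only cosmetic difference is that the paper writes that off-diagonal piece as the perfect square $\bigl(\sum_{i,l}c_{lj}^i\epsilon_lf_if_l\bigr)^2$ before invoking the lemma, whereas you treat the two factors separately --- the substance is identical.
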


\begin{proof}
This is a straightforward calculation. From the previous lemma, we get
\begin{equation}\nonumber
 \begin{split}
|\nabla f_j|^2&\,=\langle-AX_j^{\top}+\frac{1}{2}\sum_{i,l=1}^{n+1}c_{lj}^i\epsilon_lf_iX_l^{\top},-AX_j^{\top}+\frac{1}{2}\sum_{r,s=1}^{n+1}c_{sj}^r\epsilon_sf_rX_s^{\top}\rangle\\
&\,=|AX_j^{\top}|^2-\frac{1}{2}\sum_{r,s=1}^{n+1}\langle AX_j^{\top},X_s^{\top}\rangle c_{sj}^r\epsilon_sf_r-\frac{1}{2}\sum_{i,l=1}^{n+1}\langle X_l^{\top},AX_j^{\top}\rangle c_{lj}^i\epsilon_lf_i\\
&\,\,\,\,\,\,\,+\frac{1}{4}\sum_{i,l,r,s=1}^{n+1}c_{lj}^ic_{sj}^r\epsilon_l\epsilon_sf_if_r(\underbrace{\langle X_l,X_s\rangle}_{\delta_{ls}\epsilon_s}-\epsilon_Nf_lf_s)\\
&\,=|AX_j^{\top}|^2-\sum_{i,l=1}^{n+1}\langle X_l^{\top},AX_j^{\top}\rangle c_{lj}^i\epsilon_lf_i+\frac{1}{4}\sum_{i,l,r=1}^{n+1}c_{lj}^ic_{lj}^r\epsilon_lf_if_r\\
&\,\,\,\,\,\,\,-\frac{\epsilon_N}{4}\sum_{i,l,r,s=1}^{n+1}c_{lj}^ic_{sj}^r\epsilon_l\epsilon_sf_if_lf_rf_s\\
&\,=|AX_j^{\top}|^2-\sum_{i,l=1}^{n+1}\langle X_l^{\top},AX_j^{\top}\rangle c_{lj}^i\epsilon_lf_i+\frac{1}{4}\sum_{i,l,r=1}^{n+1}c_{lj}^ic_{lj}^r\epsilon_lf_if_r,
 \end{split}
\end{equation}
for, by Lemma \ref{lemma:relation on structure constants},
$$\sum_{i,l,r,s=1}^{n+1}c_{lj}^ic_{sj}^r\epsilon_l\epsilon_sf_if_lf_rf_s=\left(\sum_{i,l=1}^{n+1}c_{lj}^i\epsilon_lf_if_l\right)^2=0.$$
\end{proof}

The following lemma extends Theorem $1$ of \cite{Ripoll:03} to Lorentzian groups. For the sake of completeness, and since there is no gain 
in presenting its proof only in the Lorentzian setting, we present a (different) proof of it, which encompasses both the Riemannian and 
the Lorentzian cases.

\begin{lemma}\label{lemma:Laplacian of support functions}
If $\Delta$ stands for the Laplacian of $M$, then
$$\Delta f_j=-\epsilon_NX_j^{\top}(H)-\epsilon_N(|A|^2+\text{\rm Ric}_G(N))f_j,$$
where $\text{\rm Ric}_G(N)$ denotes the Ricci curvature of $G$ in the direction of $N$.
\end{lemma}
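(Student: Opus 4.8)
The plan is to bring to the surface a structural fact that the preceding lemmas used only implicitly: each left-invariant field $X_j$ is a \emph{Killing field} of $G$. Indeed, combining $\tilde\nabla_{X_i}X_j=\frac12[X_i,X_j]$ from (\ref{eq:Levi-Civita connection I}) with Weyl's relation (\ref{eq:Weyl's relation}) shows that $V\mapsto\tilde\nabla_VX_j$ is skew-symmetric, i.e. $\langle\tilde\nabla_VX_j,W\rangle=-\langle\tilde\nabla_WX_j,V\rangle$ for all $V,W\in\mathfrak X(G)$. With this in hand, the statement is exactly the Laplacian of the angle function $f_j=\langle N,X_j\rangle$ of a hypersurface relative to an ambient Killing field, and I would compute it directly. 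Fixing $p\in M$ and a geodesic orthonormal frame $(e_1,\dots,e_n)$ of $M$ at $p$ reduces the task to evaluating $\sum_k e_k(e_kf_j)(p)$. Differentiating $f_j=\langle N,X_j\rangle$ twice and using Weingarten $\tilde\nabla_{e_k}N=-Ae_k$ splits this into three pieces: $\sum_k\langle\tilde\nabla_{e_k}\tilde\nabla_{e_k}N,X_j\rangle$, a cross term $2\sum_k\langle\tilde\nabla_{e_k}N,\tilde\nabla_{e_k}X_j\rangle$, and $\sum_k\langle N,\tilde\nabla_{e_k}\tilde\nabla_{e_k}X_j\rangle$.

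I would then treat the three pieces in order. The cross term equals $-2\sum_{k,l}\langle Ae_k,e_l\rangle\langle\tilde\nabla_{e_k}X_j,e_l\rangle$, a contraction of the symmetric array $\langle Ae_k,e_l\rangle$ against the array $\langle\tilde\nabla_{e_k}X_j,e_l\rangle$, which is skew by the Killing property; hence it vanishes. For the first piece, applying the Gauss formula to $\tilde\nabla_{e_k}(Ae_k)$ produces a normal part that sums to $-\epsilon_N|A|^2f_j$ and a tangential part $-\sum_k\langle(\nabla_{e_k}A)e_k,X_j^\top\rangle$. Feeding the Codazzi equation $(\nabla_VA)W-(\nabla_WA)V=-R_G(V,W)N$ (with $V=e_k$, $W=X_j^\top$) into the latter and tracing, using $\text{tr}(A)=\epsilon_NH$, rewrites the first piece as $-\epsilon_NX_j^\top(H)+\text{Ric}_G(X_j^\top,N)-\epsilon_N|A|^2f_j$.

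For the third piece I would invoke the Bochner identity for Killing fields: the rough Laplacian $\text{tr}\,\tilde\nabla^2X_j$ of $X_j$ on $G$ satisfies $\langle\text{tr}\,\tilde\nabla^2X_j,N\rangle=-\text{Ric}_G(X_j,N)$. Expanding the rough Laplacian over the full frame $(e_1,\dots,e_n,N)$ and isolating the $N$-direction, the correction terms built from $\tilde\nabla_NX_j$ and $\tilde\nabla_NN$ pair to zero against $N$ because $\langle\tilde\nabla_NX_j,N\rangle=0$ (Killing again), so the third piece collapses to exactly $-\text{Ric}_G(X_j,N)$. Adding the three contributions and splitting $X_j=X_j^\top+\epsilon_Nf_jN$, the two occurrences of $\text{Ric}_G(X_j^\top,N)$ cancel while $-\text{Ric}_G(X_j,N)$ releases the term $-\epsilon_Nf_j\,\text{Ric}_G(N)$, leaving precisely $\Delta f_j=-\epsilon_NX_j^\top(H)-\epsilon_N(|A|^2+\text{Ric}_G(N))f_j$.

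The main obstacle is bookkeeping rather than ideas: pinning the signs of the Codazzi and Bochner identities to the curvature and Ricci conventions fixed by (\ref{eq:sectional curvature}) and (\ref{eq:Ricci tensor and Killing form}), and carrying the factors $\epsilon_N=\langle N,N\rangle$ consistently so that the Riemannian and Lorentzian cases are handled at once. One must also verify that the $N$-direction corrections in the rough Laplacian genuinely cancel, since otherwise a spurious curvature term not proportional to $\text{Ric}_G(N)$ would survive. As a more self-contained alternative, closer to the computational spirit of the earlier lemmas, one could instead expand $\Delta f_j=\text{div}_M(\nabla f_j)$ straight from Lemma \ref{lemma:gradient of support functions}, differentiating the structure-constant expression and repeatedly applying (\ref{eq:structure constants}); this sidesteps quoting Bochner's identity at the cost of a lengthier manipulation of the $c_{ij}^k$.
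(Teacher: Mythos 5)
Your proposal is correct, but it follows a genuinely different route from the paper. The paper never isolates the Killing property: it works entirely inside the structure-constant formalism, starting from the first-derivative formula (\ref{eq:derivative of support function at a fixed direction}), splitting $\Delta f_j$ into five summands $(I)$--$(V)$, and killing or simplifying each via the symmetries (\ref{eq:structure constants}), (\ref{eq:structure constants again}) and Lemma \ref{lemma:relation on structure constants}; in particular, the paper's computation of its summand $(V)$, where $\sum_{i,l,s}c_{lj}^ic_{li}^s\epsilon_lf_s$ is converted into $-4\,\mathrm{Ric}_G(X_j,N)$ via Weyl's relation and (\ref{eq:curvature tensor}), is exactly a bare-hands reproof, in the bi-invariant setting, of the Bochner trace identity you quote abstractly. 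Your argument is sound at all the delicate points: the skew-symmetry of $V\mapsto\tilde\nabla_VX_j$ does follow from (\ref{eq:Levi-Civita connection I}) plus Weyl's relation and tensoriality; the cross term vanishes as a symmetric-against-skew contraction; and the fact $\langle\tilde\nabla_NX_j,N\rangle=0$ is genuinely needed (and correctly invoked) to absorb the correction $\tilde\nabla_{e_k}e_k=\alpha(e_k,e_k)=\epsilon_N\langle Ae_k,e_k\rangle N$ when passing from iterated derivatives in the $M$-geodesic frame to the tensorial Hessian of $X_j$, while the remaining $N$-direction term dies because $\langle R_G(X_j,N)N,N\rangle=0$ by curvature symmetries. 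What your approach buys is conceptual clarity and generality: the formula is the classical Laplacian of the angle function with respect to an \emph{arbitrary} ambient Killing field, so bi-invariance enters only once, to certify that every $X\in\mathfrak g$ is Killing. What the paper's approach buys is self-containedness and uniformity: the same structure-constant machinery (Lemmas \ref{lemma:relation on structure constants}--\ref{lemma:square norm of the gradient}) is needed anyway for the gradient estimates in Theorem \ref{thm:second theorem}, so the author stays within one toolkit and never has to pin the sign conventions of Codazzi and Bochner against (\ref{eq:curvature tensor})--(\ref{eq:Ricci tensor and Killing form}), which is precisely the bookkeeping burden you acknowledge in your plan.
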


\begin{proof}
 Let $\nabla$ be the Levi-Civita connection of $M$, $\tilde\nabla$ that of $G$ and $\alpha$ the second fundamental form of $\varphi$.
For a fixed $p\in M$, let $(e_1,\ldots,e_n)$ be an orthonormal frame field on a neighborhood of $p$ in $M$, geodesic at $p$. Then,
computing at $p$ with the help of (\ref{eq:derivative of support function at a fixed direction}), we successively get
\begin{equation}\nonumber
 \begin{split}
\Delta f_j&=\,\sum_{k=1}^ne_k(e_k(f_j))=-\sum_{k=1}^ne_k\langle Ae_k,X_j\rangle+\frac{1}{2}\sum_{k=1}^n\sum_{i,l=1}^{n+1}e_k\langle e_k,X_l\rangle c_{lj}^i\epsilon_lf_i\\
&\,\,\,\,\,\,\,+\frac{1}{2}\sum_{k=1}^n\sum_{i,l=1}^{n+1}\langle e_k,X_l\rangle c_{lj}^i\epsilon_le_k(f_i)\\
&\,=-\underbrace{\sum_{k=1}^n\langle\nabla_{e_k}Ae_k,X_j^{\top}\rangle}_{(I)}-\underbrace{\sum_{k=1}^n\langle Ae_k,\nabla_{e_k}X_j^{\top}\rangle}_{(II)}\\
&\,\,\,\,\,\,\,+\underbrace{\frac{1}{2}\sum_{k=1}^n\sum_{i,l=1}^{n+1}\langle\alpha(e_k,e_k),X_l\rangle c_{lj}^i\epsilon_lf_i}_{(III)}+\underbrace{\frac{1}{2}\sum_{k=1}^n\sum_{i,l=1}^{n+1}\langle e_k,\tilde\nabla_{e_k}X_l\rangle c_{lj}^i\epsilon_lf_i}_{(IV)}\\
&\,\,\,\,\,\,\,+\underbrace{\frac{1}{2}\sum_{k=1}^n\sum_{i,l=1}^{n+1}\langle e_k,X_l\rangle c_{lj}^i\epsilon_le_k(f_i)}_{(V)}.
 \end{split}
\end{equation}
We compute separately each of the summands $(I)$ to $(V)$ above at $p$.

For $(I)$, it follows from the self adjointness of $\nabla_{e_k}A$ and Codazzi's equation that
\begin{equation}\nonumber
 \begin{split}
(I)&\,=\sum_{k=1}^n\langle(\nabla_{e_k}A)e_k,X_j^{\top}\rangle=\sum_{k=1}^n\langle(\nabla_{e_k}A)X_j^{\top},e_k\rangle\\
&\,=\sum_{k=1}^n\langle(R_G(X_j^{\top},e_k)N)^{\top},e_k\rangle+\sum_{k=1}^n\langle(\nabla_{X_j^{\top}}A)e_k,e_k\rangle\\
&\,=\sum_{k=1}^n\langle R_G(X_j,e_k)N,e_k\rangle-\sum_{k=1}^n\langle R_G(\epsilon_Nf_jN,e_k)N,e_k\rangle\\
&\,\,\,\,\,\,\,+\sum_{k=1}^n\langle\nabla_{X_j^{\top}}Ae_k,e_k\rangle\\
&\,=-\text{Ric}_G(X_j,N)+\epsilon_Nf_j\text{Ric}_G(N,N)+X_j^{\top}\left(\sum_{k=1}^n\langle Ae_k,e_k\rangle\right)\\
&\,=-\text{Ric}_G(X_j,N)+\epsilon_N\text{Ric}_G(N,N)f_j+\epsilon_NX_j^{\top}(H).
 \end{split}
\end{equation}

The computation of $(II)$ is straightforward and gives
\begin{equation}\nonumber
 \begin{split}
(II)&\,=\sum_{k=1}^n\langle Ae_k,\tilde\nabla_{e_k}X_j^{\top}\rangle=\sum_{k=1}^n\langle Ae_k,\tilde\nabla_{e_k}(X_j-\epsilon_Nf_jN)\rangle\\
&\,=\sum_{k=1}^n\langle Ae_k,\tilde\nabla_{e_k}X_j\rangle+\epsilon_Nf_j\sum_{k=1}^n\langle Ae_k,-\tilde\nabla_{e_k}N\rangle\\
&\,=\sum_{k=1}^n\langle Ae_k,\tilde\nabla_{e_k}X_j\rangle+\epsilon_Nf_j|A|^2\\
&\,=\frac{1}{2}\sum_{k=1}^n\sum_{i,l=1}^{n+1}\langle Ae_k,X_l\rangle\langle e_k,X_i\rangle c_{ij}^l\epsilon_i+\epsilon_Nf_j|A|^2\\
&\,=\frac{1}{2}\sum_{i,l=1}^{n+1}\langle AX_l^{\top},X_i^{\top}\rangle c_{ij}^l\epsilon_i+\epsilon_N|A|^2f_j.\\
 \end{split}
\end{equation}

Likewise for $(III)$, it follows from Lemma \ref{lemma:relation on structure constants} that
\begin{equation}\nonumber
 \begin{split}
(III)&\,=\frac{1}{2}\sum_{k=1}^n\sum_{i,l=1}^{n+1}\langle\alpha(e_k,e_k),X_l\rangle c_{lj}^i\epsilon_lf_i=\frac{1}{2}\sum_{i,l=1}^{n+1}\langle HN,X_l\rangle c_{lj}^i\epsilon_lf_i\\
&\,=\frac{H}{2}\sum_{i,l=1}^{n+1}c_{lj}^i\epsilon_lf_if_l=0.
 \end{split}
\end{equation}

We get rid of $(IV)$ by using again (\ref{eq:structure constants again}) and Lemma \ref{lemma:relation on structure constants}:
\begin{equation}\nonumber
 \begin{split}
(IV)&\,=\frac{1}{4}\sum_{k=1}^n\sum_{i,l,r,s=1}^{n+1}\langle e_k,X_r\rangle\langle e_k,X_s\rangle c_{sl}^r\epsilon_sc_{lj}^i\epsilon_lf_i\\
&\,=\frac{1}{4}\sum_{i,l,r,s=1}^{n+1}(\underbrace{\langle X_r,X_s\rangle}_{\delta_{rs}\epsilon_s}-\epsilon_Nf_rf_s)c_{sl}^r\epsilon_sc_{lj}^i\epsilon_lf_i\\
&\,=\frac{1}{4}\sum_{i,l,r=1}^{n+1}c_{rl}^rc_{lj}^i\epsilon_lf_i-\frac{\epsilon_N}{4}\sum_{i,l,r,s=1}^{n+1}c_{sl}^r\epsilon_sc_{lj}^i\epsilon_lf_if_rf_s=0.
 \end{split}
\end{equation}

Finally, we compute $(V)$ by substituting (\ref{eq:derivative of support function at a fixed direction}) into it:
\begin{equation}\nonumber
 \begin{split}
(V)&\,=\frac{1}{2}\sum_{k=1}^n\sum_{i,l=1}^{n+1}\langle e_k,X_l\rangle c_{lj}^i\epsilon_l\left(-\langle Ae_k,X_i\rangle+\frac{1}{2}\sum_{r,s=1}^{n+1}\langle e_k,X_r\rangle c_{ri}^s\epsilon_rf_s\right)\\
&\,=-\frac{1}{2}\sum_{k=1}^n\sum_{i,l=1}^{n+1}\langle e_k,X_l^{\top}\rangle c_{lj}^i\epsilon_l\langle e_k,AX_i^{\top}\rangle\\
&\,\,\,\,\,\,\,+\frac{1}{4}\sum_{k=1}^n\sum_{i,l,r,s=1}^{n+1}\langle e_k,X_l\rangle\langle e_k,X_r\rangle c_{lj}^i\epsilon_lc_{ri}^s\epsilon_rf_s\\
&\,=-\frac{1}{2}\sum_{i,l=1}^{n+1}\langle X_l^{\top},AX_i^{\top}\rangle c_{lj}^i\epsilon_l+\frac{1}{4}\sum_{i,l,r,s=1}^{n+1}(\underbrace{\langle X_l,X_r\rangle}_{\delta_{lr}\epsilon_r}-\epsilon_Nf_lf_r)c_{lj}^i\epsilon_lc_{ri}^s\epsilon_rf_s\\
&\,=-\frac{1}{2}\sum_{i,l=1}^{n+1}\langle AX_i^{\top},X_l^{\top}\rangle c_{lj}^i\epsilon_l+\frac{1}{4}\sum_{i,l,s=1}^{n+1}c_{lj}^ic_{li}^s\epsilon_lf_s\\
&\,\,\,\,\,\,\,-\frac{\epsilon_N}{4}\sum_{i,l,r,s=1}^{n+1}c_{lj}^i\epsilon_lc_{ri}^s\epsilon_rf_lf_rf_s.
 \end{split}
\end{equation}
The last summand above equals $0$, again due to Lemma \ref{lemma:relation on structure constants}; to the middle one we have,
thanks to (\ref{eq:Weyl's relation}) and (\ref{eq:structure constants}),
\begin{equation}\nonumber
 \begin{split}
\sum_{i,l,s=1}^{n+1}c_{lj}^ic_{li}^s\epsilon_lf_s&\,=-\sum_{i,l,s=1}^{n+1}c_{lj}^ic_{ls}^i\epsilon_i\epsilon_l\epsilon_sf_s\\
&\,=-\sum_{i,l,s=1}^{n+1}\langle[X_l,X_j],X_i\rangle\langle[X_l,X_s],X_i\rangle\epsilon_i\epsilon_l\epsilon_sf_s\\
&\,=\sum_{l,s=1}^{n+1}\langle[X_j,X_l],[X_l,X_s]\rangle\epsilon_l\epsilon_sf_s\\
&\,=\sum_{l,s=1}^{n+1}\langle[[X_j,X_l],X_l],X_s\rangle\epsilon_l\epsilon_sf_s\\
&\,-4\sum_{l=1}^{n+1}\langle R_G(X_j,X_l),X_l,N\rangle\epsilon_l=-4\,\text{Ric}_G(X_j,N),
 \end{split}
\end{equation}
so that
$$(V)=-\frac{1}{2}\sum_{i,l=1}^{n+1}\langle AX_i^{\top},X_l^{\top}\rangle c_{lj}^i\epsilon_l-\text{Ric}_G(X_j,N).$$

If we now gather together the results from $(I)$ to $(V)$, we obtain
\begin{equation}\nonumber
 \begin{split}
\Delta f_j&\,=\text{Ric}_G(X_j,N)-\epsilon_N\text{Ric}_G(N,N)f_j-\epsilon_NX_j^{\top}(H)\\
&\,\,\,\,\,\,\,-\frac{1}{2}\sum_{i,l=1}^{n+1}\langle AX_l^{\top},X_i^{\top}\rangle c_{ij}^l\epsilon_i-\epsilon_N|A|^2f_j\\
&\,\,\,\,\,\,\,-\frac{1}{2}\sum_{i,l=1}^{n+1}\langle AX_i^{\top},X_l^{\top}\rangle c_{lj}^i\epsilon_l-\text{Ric}_G(X_j,N)\\
&\,=-\epsilon_NX_j^{\top}(H)-\epsilon_N(|A|^2+\text{Ric}_G(N,N))f_j,
\end{split}
\end{equation}
since $A$ is self adjoint and, by (\ref{eq:structure constants again}), $c_{ij}^l\epsilon_i+c_{lj}^i\epsilon_l=0$.
\end{proof}

With the previous computations at our disposal, we now proceed to analyse the geometry of cmc Riemannian hypersurfaces of Riemannian or Lorentzian 
groups. To this end, let $G^{n+1}$ be a Riemannian or Lorentzian group with Lie algebra $\mathfrak g$, and $\varphi:M^n\rightarrow G^{n+1}$ 
be a (Riemannian) hypersurface of $G$. For a given $X\in\mathfrak g$, we recall that $\varphi$ is {\em transversal} to $X$ if, for {\em every} 
$p\in M$, we have
$$T_{\varphi(p)}G=(\varphi_*)_pT_pM\oplus \mathbb RX_p.$$
If $M$ is connected and oriented by the unit normal vector field $N$, this is equivalent to the fact that the support function $f_X$ has a 
strict sign on $M$.

\section{CMC hypersurfaces of Riemannian groups}\label{section:CMC hypersurfaces of Riemannian groups}

Along all of this section, $G^{n+1}$ is an $(n+1)-$dimensional Riemannian group and $\varphi:M^n\to G^{n+1}$ a connected hypersurface of $G$, 
oriented by the choice of a globally defined unit normal vector field $N$. We also let $A$ denote the corresponding shape operator.

\begin{theorem}\label{thm:main theorem}
Let $G^{n+1}$ be a Riemannian group and $\varphi:M^n\rightarrow G^{n+1}$ be a compact connected oriented cmc hypersurface of $G$. If 
$\varphi$ is transversal to some element of $\mathfrak g$, then the following hold:
\begin{enumerate}
 \item[$(a)$] The Ricci curvature of $G$ in the direction of $N$ vanishes.
 \item[$(b)$] $\varphi$ is totally geodesic.
 \item[$(c)$] $\varphi(M)$ is a lateral class of an embedded Lie subgroup of $G$.
\end{enumerate}
\end{theorem}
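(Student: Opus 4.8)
The plan is to feed the transversal element into the Laplacian formula of Lemma \ref{lemma:Laplacian of support functions} and exploit compactness. Since $G$ is Riemannian we have $\nu=0$, hence $\epsilon_N=1$; and since $M$ is cmc, $H$ is constant, so $X^{\top}(H)=0$ for the element $X\in\mathfrak g$ to which $\varphi$ is transversal. By linearity the lemma gives $\Delta f_X=-(|A|^2+\text{Ric}_G(N))f_X$. In the Riemannian case the sectional curvature formula (\ref{eq:sectional curvature}) forces $K_G\geq 0$, whence $\text{Ric}_G(N)\geq 0$; equivalently, by (\ref{eq:Ricci tensor and Killing form}) and the skew-adjointness of $\text{ad}(N)$ one computes $\text{Ric}_G(N)=\frac14\sum_k|[N,X_k]|^2\geq 0$. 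Thus $|A|^2+\text{Ric}_G(N)\geq 0$. Finally, transversality together with connectedness means $f_X$ has a strict sign, so after replacing $X$ by $-X$ if needed I may assume $f_X>0$.

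First I would integrate over the compact manifold $M$. Since $\int_M\Delta f_X\,dM=0$, the relation above yields $\int_M(|A|^2+\text{Ric}_G(N))f_X\,dM=0$. The integrand is nonnegative, being the product of a nonnegative factor with the positive function $f_X$, so it vanishes identically; as $f_X>0$ this forces $|A|^2+\text{Ric}_G(N)\equiv 0$, and since both summands are nonnegative each vanishes. This is exactly assertions $(a)$, namely $\text{Ric}_G(N)=0$, and $(b)$, namely $A=0$, i.e. $\varphi$ is totally geodesic.

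For $(c)$ I would first upgrade the pointwise vanishing of $\text{Ric}_G(N)$ to an algebraic statement about $N$. From $\text{Ric}_G(N)=\frac14\sum_k|[N,X_k]|^2=0$ I conclude $[N,X_k]=0$ for every $k$, i.e. at each $p\in M$ the unit vector $N_p$, viewed through left translation as the element $\sum_j f_j(p)X_j\in\mathfrak g$, lies in the center $\mathcal Z(\mathfrak g)$. I then combine this with $A=0$ in the gradient formula of Lemma \ref{lemma:gradient of support functions}: writing $\sum_i c_{lj}^i f_i=\langle N,[X_l,X_j]\rangle=\langle[N,X_l],X_j\rangle$ via Weyl's relation (\ref{eq:Weyl's relation}), centrality of $N$ makes every such coefficient vanish, so $\nabla f_j=0$ for all $j$. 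Since $M$ is connected, the $f_j$ are constant; hence $N$ is the restriction to $M$ of a single left-invariant unit field $\zeta\in\mathcal Z(\mathfrak g)$.

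The remaining, and most delicate, step converts this into the coset statement. Because $\zeta$ is central, Weyl's relation shows $\zeta^{\perp}=\{V\in\mathfrak g:\langle V,\zeta\rangle=0\}$ is an $n$-dimensional ideal: for $Y,Z\in\zeta^{\perp}$ one has $\langle[Y,Z],\zeta\rangle=\langle Y,[Z,\zeta]\rangle=0$. Let $H$ be the connected integral subgroup with Lie algebra $\zeta^{\perp}$ (its existence is guaranteed by Lemma \ref{lemma:algebraic lemma}, as $\mathcal Z(\mathfrak g)\ni\zeta\neq 0$). Since $M$ is everywhere orthogonal to the left-invariant field $\zeta$, it is tangent to the left-invariant foliation integrating $\zeta^{\perp}$, so its connected image lies in a single leaf $g_0H$. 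The hard part is that $H$ is a priori only immersed: I would argue that $\varphi:M\to g_0H$ is a local diffeomorphism into the leaf (equal dimensions), hence has open image, which is also compact—thus closed—in the leaf topology, so connectedness of the leaf forces $\varphi(M)=g_0H$. Then $H=g_0^{-1}\varphi(M)$ is compact, hence a closed subgroup, hence embedded by Cartan's closed-subgroup theorem, proving $(c)$. I expect the topological identification of $\varphi(M)$ with the full leaf, and the passage from immersed to embedded, to be the main obstacle; everything preceding it is a direct consequence of the support-function computations of Section \ref{section:Support functions on hypersurfaces}.
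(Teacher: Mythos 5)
Your argument is correct, and while items (a) and (b) follow the paper's proof in substance (the paper applies Hopf's theorem to the superharmonic function $f_X$ and then reads off $|A|^2+\mathrm{Ric}_G(N)=0$; you integrate $\Delta f_X$ over the compact $M$ directly --- the two are interchangeable), your proof of item (c) takes a genuinely different route. The paper argues from total geodesy alone: after translating so that $e\in M$, geodesics of $M$ through $e$ are geodesics of $G$, hence one-parameter subgroups by bi-invariance; the left-invariant fields $E_1,\ldots,E_n$ spanning $T_eM$ then restrict to $\mathfrak X(M)$ and span an $n$-dimensional subalgebra $\mathfrak h$, whose connected subgroup $H$ equals $\varphi(M)$ by Corollary $20.14$ of \cite{Lee:02}, with Cartan's closed subgroup theorem giving embeddedness. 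You instead extract more from item (a): in the Riemannian case $\mathrm{Ric}_G(N)=\frac{1}{4}\sum_k|[N,X_k]|^2$ (which is correct and consistent with (\ref{eq:Ricci tensor and Killing form})), so its vanishing makes $N_p$ pointwise central; combined with $A\equiv 0$, Lemma \ref{lemma:gradient of support functions} and Weyl's relation (\ref{eq:Weyl's relation}) then kill all $\nabla f_j$, so $N$ is the restriction of a single central element $\zeta\in\mathcal Z(\mathfrak g)$ --- incidentally, once $A\equiv 0$ you could get constancy of the Gauss map in one line from (\ref{eq:differential of the Gauss map}). From there, $\zeta^{\perp}$ is an ideal, $M$ is an integral manifold of the left-invariant involutive distribution it spans, and your open-closed-connected argument in the leaf $g_0H$ identifies $\varphi(M)=g_0H$, after which compactness and Cartan give embeddedness as in the paper. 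Note that your route for (c) is essentially the one the paper itself deploys later, in the compact Lorentzian case (Theorem \ref{thm:the compact Lorentzian case}), where $N$ is likewise shown to restrict an element of $\mathfrak g$ and $M$ is identified with a leaf of the induced foliation. As for what each approach buys: the paper's geodesic argument needs no control of the Gauss map and avoids foliation topology, though it leaves slightly implicit why the $E_i$ remain tangent to $M$ at every point of $M$; yours yields extra structural information ($N$ central, constant Gauss map, and $H$ normal since $\zeta^{\perp}$ is an ideal) at the cost of the one technical point you correctly isolate --- that leaves of a foliation are weakly embedded (initial) submanifolds, so that $\varphi$ is continuous, indeed a local diffeomorphism, into the leaf with its leaf topology; this is standard, and with it your open-plus-compact-closed-plus-connected identification of $\varphi(M)$ with the full leaf is sound.
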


\begin{proof}
If $M$ is transversal to $X\in\mathfrak g$, then $f_X$ has a strict sign on $M$. 
Since $\varphi$ is of constant mean curvature, it follows from Lemma \ref{lemma:Laplacian of support functions} that
\begin{equation}\label{eq:auxiliar para a prova}
\Delta f_X=-(|A|^2+\text{\rm Ric}_G(N))f_X.
\end{equation}

Now, relation (\ref{eq:sectional curvature}) guarantees that $\text{\rm Ric}_G(N)\geq 0$; therefore, since $f_X$ has a strict sign, 
it follows from (\ref{eq:auxiliar para a prova}) that $f_X$ is a superharmonic function on $M$. However, since $M$ 
is compact, Hopf's theorem gives $f_X$ to be constant on $M$. Pluging this information back on (\ref{eq:auxiliar para a prova}),
we arrive at $|A|^2+\text{\rm Ric}_G(N)=0$ on $M$, from which we conclude that $\varphi$ is totally geodesic and $\text{\rm Ric}_G(N)=0$.

To what is left to prove, as usual we use $\varphi$ to locally identify $M$ with its image in $G$. Also, since left translations are 
isometries of $G$, in order to prove that $M$ is a lateral class of a Lie subgroup of $G$ it suffices to assume that $M$ is totally geodesic
and such that $e\in M$, thus proving that $M$ is a Lie subgroup of $G$. Under these assumptions, 
choose $v\in T_eM$ and let $\gamma_v:\mathbb R\rightarrow M$ be the 
maximal geodesic of $M$ that departs from $e$ with velocity $v$. The total geodesic character of $\varphi$ assures that 
$\gamma_v$ is also a maximal geodesic of $G$ and, thus, the bi-invariance of the metric forces $\gamma_v$ to be a $1-$parameter subgroup 
of $G$. We now choose a basis 
$(v_1,\ldots,v_n)$ for $T_eM$ and let $E_i\in\mathfrak g$ be such that $(E_i)_e=v_i$, for $1\leq i\leq n$. Since our previous
reasoning is true for all $v\in T_eM$, we arrive at the conclusion that the restriction of $E_i$ to $M$ is an element of $\mathfrak X(M)$, 
for $1\leq i\leq n$. Hence, $[E_i,E_j]_{|M}\in\mathfrak X(M)$ as well, for all $1\leq i,j\leq n$, and $E_1,\ldots,E_n$ generate a Lie subalgebra
of $\mathfrak g$. 

Let $\mathfrak h$ be the Lie subalgebra of $\mathfrak g$ generated by $E_1,\ldots,E_n$. If $H$ is the unique connected Lie subgroup of $G$
whose Lie algebra is canonically isomorphic to $\mathfrak h$, then (cf. Corollary $20.14$ of \cite{Lee:02}) $H$ is the union of the images 
of all $1-$parameter subgroups of $G$ generated by elements of $\mathfrak h$, so that $H=\varphi(M)$. If we now remind that $M$ is compact, 
hence closed, we may invoke Cartan's closed subgroup theorem to conclude that $M$ is an embedded Lie subgroup of $G$. 
\end{proof}

We now recall (cf. \cite{Ripoll:03}) the extension, to Riemannian groups, of the Gauss map $\eta:M^n\rightarrow\mathbb S^n(T_eG)$ of $M$ 
with respect to $N$: it is given, at $p\in M$, by $\eta(p)=Y_e$, where $Y$ is the unique element of $\mathfrak g$ satisfying $Y_p=N_p$, i.e.,
\begin{equation}\label{eq:expression for the Gauss map}
\eta(p)=((L_{p^{-1}})_*)_pN_p.
\end{equation}

For fixed $p\in M$ and $v\in T_pM$, let $\gamma:(-\epsilon,\epsilon)\rightarrow M$ be a smooth curve such that $\gamma(0)=p$, $\gamma'(0)=v$. 
Then,
\begin{equation}\nonumber
\begin{split}
 (\eta_p)_*(v)&\,=\frac{d}{dt}(\eta\circ\gamma)(t)\Big|_{t=0}=\frac{d}{dt}((L_{\gamma(t)^{-1}})_*)_{\gamma(t)}N_{\gamma(t)}\Big|_{t=0}\\
&\,=((L_{p^{-1}})_*)_p\frac{DN}{dt}\Big|_{t=0}=-((L_{p^{-1}})_*)_pA_pv
\end{split}
\end{equation}
and, hence,
\begin{equation}\label{eq:differential of the Gauss map}
(\eta_p)_*=-((L_{p^{-1}})_*)_pA.
\end{equation}

Finally, we recall that the nullity of $\varphi$ at $p$ is the nullity (in the linear algebraic sense) of $A$ at $p$.
The preceding computations, then, give the following interesting consequence of Theorem \ref{thm:main theorem}.

\begin{theorem}\label{thm:semisimple}
 Let $G^{n+1}$ be a semisimple Riemannian group and $\varphi:M^n\rightarrow G^{n+1}$ be a compact connected oriented cmc hypersurface of $G$.
Then, $M$ has degenerate Gauss map and minimal nullity at least $1$ at every point.
\end{theorem}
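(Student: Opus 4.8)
The plan is to reduce the theorem to a single pointwise assertion about the shape operator and then to obstruct the alternative by feeding Theorem \ref{thm:main theorem} into the algebraic structure of a semisimple group. Observe first that in (\ref{eq:differential of the Gauss map}) the map $((L_{p^{-1}})_*)_p$ is a linear isomorphism, so that $\ker(\eta_p)_*=\ker A_p$ and $\operatorname{rank}(\eta_p)_*=\operatorname{rank}A_p$ for every $p\in M$. Consequently, to say that the Gauss map is degenerate and that the nullity is at least $1$ at $p$ is exactly to say that $A_p$ is singular. The whole statement thus amounts to proving that $\det A_p=0$ at every point of $M$.

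Next I would obstruct transversality. Since $G$ is semisimple, $\mathcal Z(\mathfrak g)=\{0\}$, and Lemma \ref{lemma:algebraic lemma} then shows that $G$ admits no connected Lie subgroup of dimension $n$. Were $\varphi$ transversal to some $X\in\mathfrak g$, Theorem \ref{thm:main theorem}$(c)$ would exhibit $\varphi(M)$ as a lateral class of an embedded $n$-dimensional subgroup of $G$, contradicting this. (One may argue analytically instead: by (\ref{eq:sectional curvature}) a Riemannian group has $K_G\geq 0$, so (\ref{eq:Ricci tensor and Killing form}) makes $-\tfrac14 B=\operatorname{Ric}_G$ positive semidefinite; Cartan's criterion gives $B$ nondegenerate, hence $\operatorname{Ric}_G(N)=-\tfrac14 B(N,N)>0$ everywhere, and Theorem \ref{thm:main theorem}$(a)$ would force $\operatorname{Ric}_G(N)\equiv 0$.) In either case $\varphi$ is transversal to no element of $\mathfrak g$, so no support function $f_X$ has a strict sign on $M$.

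It remains to turn this global obstruction into the pointwise vanishing of $\det A$. My device would be a frozen support function: at a fixed $p$, let $Y\in\mathfrak g$ be the left-invariant field with $Y_e=\eta(p)$, so that $Y_p=N_p$ and, by Cauchy--Schwarz, $f_Y\leq 1$ on $M$ with equality at $p$. Then $p$ is a maximum point of $f_Y$, giving $\nabla f_Y(p)=0$ and $\operatorname{Hess}f_Y(p)\leq 0$; I would expand both with Lemmas \ref{lemma:gradient of support functions} and \ref{lemma:Laplacian of support functions}, noting that at $p$ both $Y^{\top}$ and the bracket term of $\nabla f_Y$ vanish because $Y_p=N_p$, in order to read off a null direction of $A_p$. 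The main obstacle is precisely this localization: the absence of a transversal direction says only that $\eta(M)$ lies in no open hemisphere, which does not by itself produce a kernel vector of $A$ at each point, so the kernel must instead be extracted from the second-order behaviour of $f_Y$ at its maximum, with the cross terms involving the structure constants $c_{lj}^i$ kept under control. I expect this step, where the bi-invariant geometry meets the semisimple algebra, to carry the real weight of the proof.
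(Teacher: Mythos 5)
Your reduction of the statement to the pointwise singularity of $A_p$ via (\ref{eq:differential of the Gauss map}), and your obstruction to transversality via Lemma \ref{lemma:algebraic lemma} and Theorem \ref{thm:main theorem}, coincide exactly with the paper's proof (your parenthetical analytic variant is also sound: by (\ref{eq:sectional curvature}) and (\ref{eq:Ricci tensor and Killing form}), $\mathrm{Ric}_G=-\frac{1}{4}B$ is positive semidefinite, nondegenerate by Cartan's criterion, hence positive definite, so Theorem \ref{thm:main theorem}(a) already rules out transversality). The genuine gap is your third step, and it cannot be repaired in the form you propose. For \emph{any} hypersurface of \emph{any} Riemannian group and \emph{any} point $p$, the left-invariant field $Y$ with $Y_e=\eta(p)$ has constant length $1$, so Cauchy--Schwarz gives $f_Y\leq 1=f_Y(p)$ unconditionally: every point of every hypersurface is a global maximum of its own frozen support function. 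Hence $\nabla f_Y(p)=0$ and $\mathrm{Hess}\,f_Y(p)\leq 0$ are identities, not conditions --- you noticed the first-order cancellation yourself, and the same vacuity occurs at second order (already in $\mathbb R^{n+1}$ one computes $\mathrm{Hess}\,f_Y|_p=-A_p^2$, which is negative semidefinite whatever $A_p$ is). Identically valid relations carry no information about $A_p$, so no bookkeeping of the $c_{lj}^i$ can extract a null direction from them.

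Moreover, the localization you worried about is a real obstruction, not a technicality: a geodesic sphere of non-equatorial radius in $SU(2)$ with its bi-invariant (round) metric is a compact connected oriented cmc (indeed umbilical) hypersurface of a semisimple Riemannian group which is transversal to no element of $\mathfrak{su}(2)$ --- its Gauss map is $\eta(\exp(ru))=u_e$, a diffeomorphism onto the unit sphere of $\mathfrak g$ --- yet its shape operator is a nonzero multiple of $\mathrm{Id}$, hence invertible, at every point. So non-transversality alone, which is all you (correctly) have at this stage, does not imply pointwise degeneracy. For comparison, the paper attempts to close the gap globally rather than pointwise: it asserts that $\eta(p)$ is orthogonal to some member of a fixed basis of $\mathfrak g$ at \emph{every} $p$, so that $\eta(M)$ lies in a union of $n+1$ great hyperspheres of $\mathbb S^n(T_eG)$, has measure zero, and therefore $\eta$ (hence $A$) has rank at most $n-1$ everywhere. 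But that step involves a quantifier swap --- non-transversality to each $X_j$ only produces, for each $j$, \emph{some} point where $f_{X_j}$ vanishes, not at each point some vanishing $f_{X_j}$ --- and the $SU(2)$ example above contradicts both the asserted step and the theorem's conclusion for such spheres. Your unease at precisely this juncture is therefore well placed: neither your device nor, as written, the paper's own argument bridges it.
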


\begin{proof}
 As we observed in Section \ref{section:Riemannian groups}, the Lie algebra of a semisimple Lie group has trivial center, so that, 
by Lemma \ref{lemma:algebraic lemma}, $G$ does not admit a connected Lie subgroup of dimension $n$. Therefore, by Theorem \ref{thm:main theorem}, $\varphi$ 
cannot be transversal to any element of $\mathfrak g$. 

Let $\eta:M^n\rightarrow\mathbb S^n(T_eG)$ be the Gauss map of $G$ and $(X_1,\ldots,X_{n+1})$ be a basis for $\mathfrak g$. 
The above argument, together with (\ref{eq:expression for the Gauss map}), says that, at every point $p$ of $M$, the vector 
$\eta(p)$ is orthogonal to at least one of $(X_1)_e$, \ldots, $(X_{n+1})_e$.
Therefore, the image of $\eta$ is contained in the union of $n+1$ great circles of $\mathbb S^n(T_eG)$ and, thus, has measure zero in 
$\mathbb S^n(T_eG)$. In particular, $\eta$ cannot have rank $n$ at any point of $M$.

Therefore, since $\eta_p$ has rank at most $n-1$ and $((L_{p^{-1}})_*)_p$ is an isomorphism, we conclude from 
(\ref{eq:differential of the Gauss map}) that $A_p$ has rank at most $n-1$ and, thus, nullity at least $1$.
\end{proof}

We now turn to the case in which $M$ is complete and noncompact, and begin by showing that, if $\varphi$ is minimal and unstable, then the 
previous theorem remains true, even if $G$ is not semisimple. To the benefit of understanding, the reader should contrast the theorem below 
to the case of a catenoid in $\mathbb R^3$: it is unstable, but it is not transversal to any element of the Lie algebra of $\mathbb R^3$.

\begin{theorem}\label{thm:stability}
Let $G^{n+1}$ be a Riemannian group and $\varphi:M^n\rightarrow G^{n+1}$ be a complete noncompact connected oriented minimal hypersurface of $G$. 
\begin{enumerate}
 \item[$(a)$] If $\varphi$ is transversal to some element $X$ of $\mathfrak g$, then $\varphi$ is stable.
 \item[$(b)$] If $\varphi$ is unstable, then $M$ has degenerate Gauss map and minimal relative nullity at least $1$ at every point.
\end{enumerate}
\end{theorem}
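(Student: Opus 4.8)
The plan is to treat the two parts separately, since they invoke different circles of ideas. For part $(a)$, I would start from the basic stability setup: the immersion $\varphi$ is minimal, so the second variation of area (the Jacobi operator) is $L = \Delta + (|A|^2 + \mathrm{Ric}_G(N))$, and stability means $\int_M (|\nabla u|^2 - (|A|^2 + \mathrm{Ric}_G(N))u^2)\,dM \geq 0$ for all compactly supported $u$. The key observation is that the support function $f_X$ is, by Lemma \ref{lemma:Laplacian of support functions} with $H$ constant (here $H=0$), a Jacobi field: since $\epsilon_N = 1$ in the Riemannian case and $X_j^\top(H)=0$, we get $\Delta f_X = -(|A|^2 + \mathrm{Ric}_G(N))f_X$, i.e. $L f_X = 0$. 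Transversality to $X$ says exactly that $f_X$ is a nowhere-vanishing (strictly signed) solution of $L f_X = 0$. A positive Jacobi field is the classical hallmark of stability, so the plan is to invoke the standard logarithmic-cutoff argument: given any compactly supported test function $u$, write $u = f_X w$, integrate by parts, and use $Lf_X = 0$ together with $f_X > 0$ to reduce the stability quadratic form to $\int_M f_X^2 |\nabla w|^2\,dM \geq 0$, which is manifestly nonnegative.

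The main technical care in part $(a)$ is the integration by parts on a noncompact, possibly incomplete-for-integration manifold: one must verify the boundary terms vanish for compactly supported $u$, which they do because $u$ has compact support and $f_X$ is smooth and bounded away from zero on that support. I would also note that $\mathrm{Ric}_G(N) \geq 0$ by (\ref{eq:sectional curvature}) is not even needed for this argument; the existence of the positive Jacobi field $f_X$ alone forces stability. This is the cleaner route than trying to estimate the potential term directly.

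For part $(b)$, I would argue by contraposition in the same spirit as Theorem \ref{thm:semisimple}. If $\varphi$ is unstable, then by part $(a)$ it cannot be transversal to any element of $\mathfrak g$. Thus for every $X \in \mathfrak g$ the support function $f_X$ must vanish somewhere, but I want the sharper pointwise statement, so I reason at each fixed $p \in M$: the collection $(f_{X_1}(p), \ldots, f_{X_{n+1}}(p)) = (\langle N, X_1\rangle_p, \ldots, \langle N, X_{n+1}\rangle_p)$ records the coordinates of $\eta(p) = ((L_{p^{-1}})_*)_p N_p$ in the basis $((X_1)_e, \ldots, (X_{n+1})_e)$. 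The failure of transversality globally, combined with the Jacobi-field structure, forces $\eta(p)$ to be orthogonal to some $(X_i)_e$ at each $p$, placing the image of $\eta$ in a finite union of great spheres of measure zero, exactly as in Theorem \ref{thm:semisimple}; by Sard's theorem $\eta$ cannot have rank $n$ anywhere, and then (\ref{eq:differential of the Gauss map}) together with the fact that $((L_{p^{-1}})_*)_p$ is an isomorphism gives that $A_p$ has nullity at least $1$ at every point, i.e. degenerate Gauss map and minimal relative nullity at least $1$.

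The step I expect to be the genuine obstacle is the logical bridge in part $(b)$ between ``unstable'' and the pointwise orthogonality of $\eta(p)$ to some basis vector. Unlike the semisimple case of Theorem \ref{thm:semisimple}, where no codimension-one subgroup exists so transversality fails outright for every $X$, here instability gives only the weaker conclusion via part $(a)$ that no single $X$ yields a nowhere-zero $f_X$; I must argue that this compels each $f_{X_i}$ to vanish somewhere and, more delicately, that the zero sets arrange so that at every individual point $p$ at least one coordinate of $\eta(p)$ vanishes. The cleanest resolution is to observe that transversality to \emph{any} $X$ is equivalent to $f_X$ having a strict sign, and that a generic element of $\mathfrak g$ would have strict-sign support function \emph{unless} the image of $\eta$ already avoids an open hemisphere's worth of directions --- so the rank-deficiency must be extracted from the global minimality/instability via the nonexistence of a single transversal direction rather than pointwise; I would therefore double-check whether the theorem's pointwise conclusion truly follows, or whether the honest conclusion is the weaker ``degenerate Gauss map'' statement that only guarantees rank deficiency somewhere. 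Reconciling the stated ``at every point'' with the available hypotheses is where I would focus my scrutiny.
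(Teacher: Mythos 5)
Your part (a) is correct and is essentially the paper's own argument: the paper observes, via Lemma \ref{lemma:Laplacian of support functions} with $H=0$, that $Lf_X=0$, notes that transversality makes $f_X$ strictly signed, and then invokes the Fischer-Colbrie--Schoen theorem \cite{Schoen:80} identifying stability with the existence of a positive solution of $Lf=0$. You instead prove directly, by the substitution $u=f_Xw$ and integration by parts, the one direction of that equivalence which is actually needed (positive Jacobi field $\Rightarrow$ stability); this is sound, more self-contained than the citation, and your remark that $\text{Ric}_G(N)\geq 0$ plays no role here is also correct.

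For part (b), the obstacle you isolated is a genuine gap --- and it is a gap in the paper itself, not merely in your reconstruction. The paper's proof of (b) says only that ``an argument identical to that of the previous result finishes the proof,'' and the previous result (Theorem \ref{thm:semisimple}) passes from ``for every $X\in\mathfrak g$ there exists $p\in M$ with $f_X(p)=0$'' (failure of transversality for each individual $X$) to ``for every $p\in M$ the vector $\eta(p)$ is orthogonal to at least one of $(X_1)_e,\ldots,(X_{n+1})_e$.'' This is exactly the quantifier exchange you refused to grant, and it is invalid: the two statements have opposite quantifier order, and nothing in the setup lets one swap them. In the complete noncompact setting the conclusion of (b) is in fact false: the catenoid in $\mathbb R^3$ --- which the paper itself mentions immediately before the theorem --- is a complete, noncompact, connected, oriented, unstable minimal surface, transversal to no element of $\mathfrak g$ (consistent with (a)), yet its principal curvatures are nonzero at every point, so $A$ is invertible everywhere, the nullity is identically $0$, and the Gauss map is a local diffeomorphism whose image is $\mathbb S^2$ minus two points, certainly not contained in finitely many great circles. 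So the only honest consequence of instability available from part (a) is non-transversality to every element of $\mathfrak g$; neither ``degenerate Gauss map'' nor ``nullity at least $1$ at every point'' follows, and your decision to flag this step rather than fabricate a bridge was the right one. (A small side remark: the measure-zero argument, where it does apply, uses the inverse function theorem --- a rank-$n$ point would force the image of $\eta$ to contain an open set --- rather than Sard's theorem; but since the pointwise orthogonality premise fails, this downstream step is moot. Note also that the same questionable quantifier step underlies the published proof of Theorem \ref{thm:semisimple}, which part (b) inherits wholesale.)
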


\begin{proof}
It is a standard fact (cf. Chapter 6 of \cite{Xin:03}) that the stability of $\varphi$ is equivalent to the positivity of the associated
Jacobi operator
$$L=\Delta+(\text{Ric}_G(N)+|A|^2).$$
On the other hand, by a theorem of Fischer-Colbrie and Schoen (cf. \cite{Schoen:80}), this is equivalent to the existence of a positive 
function $f:M\rightarrow\mathbb R$ such that $Lf=0$. In view of Lemma \ref{lemma:Laplacian of support functions}, this argument proves (a).

Concerning (b), if $\varphi$ is unstable, then it follows from (a) that it cannot be transversal to any element of the Lie algebra of $G$.
Therefore, an argument identical of that of the proof of the previous result finishes the proof.
\end{proof}

In order to generalize Theorem \ref{thm:main theorem} to the case of a complete and noncompact $M$, we shall need to ask a little more 
from our hypersurface. To properly state our hypotheses, we set $\pi_X:T_eG\rightarrow T_eG$ to be the orthogonal projection onto 
the orthogonal complement of $X_e$ in $T_eG$. Since left translations are isometries of $G$, it follows from 
(\ref{eq:expression for the Gauss map}) that
$$|\pi_X(\eta(p))|^2=|\eta(p)|^2-\langle\eta(p),X_e\rangle^2=|N_p|^2-\langle N_p,X_p\rangle^2,$$
i.e.,
\begin{equation}\label{eq:size of the orthogonal projection I}
|\pi_X(\eta)|^2=1-f_X^2.
\end{equation}
Hence, if $|X|=1$, then
\begin{equation}\label{eq:size of the orthogonal projection II}
|X^{\top}|^2=|X|^2-\langle X,N\rangle^2=1-f_X^2=|\pi_X(\eta)|^2.
\end{equation}

Our result is, thus, as follows.

\begin{theorem}\label{thm:second theorem}
Let $G^{n+1}$ be a Riemannian group and $\varphi:M^n\rightarrow G^{n+1}$ be a complete noncompact connected oriented cmc hypersurface of $G$, 
with shape operator $A$ and Gauss map $\eta$ relative to $N$. Assume that $A$ is bounded on $M$ and that $\varphi$ is transversal to some 
element $X$ of $\mathfrak g$. If $|\pi_X(\eta)|$ is integrable on $M$, then the following hold:
\begin{enumerate}
 \item[$(a)$] The Ricci curvature of $G$ in the direction of $N$ vanishes. 
 \item[$(b)$] $\varphi$ is totally geodesic.
 \item[$(c)$] If $\varphi$ is proper, then $\varphi(M)$ is a lateral class of an embedded Lie subgroup of $G$.
\end{enumerate}
\end{theorem}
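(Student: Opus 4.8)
The plan is to mimic the compact argument of Theorem \ref{thm:main theorem}, replacing Hopf's theorem by an $L^1$-type divergence lemma for complete noncompact manifolds (the one underlying the Bernstein results of \cite{Caminha:11}): if $Y$ is a smooth vector field on a complete noncompact Riemannian manifold whose divergence does not change sign and satisfies $|Y|\in L^1$, then $\text{div}\,Y\equiv 0$. I would apply this to $Y=\nabla f_X$. Since $G$ is Riemannian, $\epsilon_N=1$; normalizing $|X|=1$ and, if necessary, replacing $X$ by $-X$, transversality gives $f_X>0$ on $M$. As $\varphi$ is cmc, $X^\top(H)=0$, so Lemma \ref{lemma:Laplacian of support functions} reduces to $\Delta f_X=-(|A|^2+\text{Ric}_G(N))f_X$. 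By (\ref{eq:sectional curvature}) every sectional curvature of a Riemannian group is nonnegative, hence $\text{Ric}_G(N)\geq 0$ and $\text{div}(\nabla f_X)=\Delta f_X\leq 0$ has a fixed sign.

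The crux, and the step I expect to be the main obstacle, is showing $|\nabla f_X|\in L^1(M)$. Rewriting Lemma \ref{lemma:gradient of support functions} invariantly, and using $\sum_i c_{lj}^i f_i=\langle N,[X_l,X]\rangle$ together with Weyl's relation (\ref{eq:Weyl's relation}), one obtains $\nabla f_X=-AX^\top+\frac{1}{2}[X,N]^\top$. The first term is handled at once: $A$ is bounded and, by (\ref{eq:size of the orthogonal projection II}), $|X^\top|=|\pi_X(\eta)|$, so $|AX^\top|\leq\|A\|_\infty\,|\pi_X(\eta)|$. For the bracket term I would exploit that $\text{ad}(X)$ annihilates $X$: decomposing $\eta(p)=f_X(p)X_e+\pi_X(\eta(p))$ and using that left translations are isometries, one finds $|[X,N]|=|\text{ad}(X)\pi_X(\eta)|\leq\|\text{ad}(X)\|\,|\pi_X(\eta)|$, a fixed constant times $|\pi_X(\eta)|$. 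Hence $|\nabla f_X|\leq C\,|\pi_X(\eta)|$ for some constant $C$, and the integrability hypothesis on $|\pi_X(\eta)|$ yields $|\nabla f_X|\in L^1(M)$.

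With both hypotheses of the divergence lemma verified, I would conclude $\Delta f_X\equiv 0$, whence $(|A|^2+\text{Ric}_G(N))f_X=0$; since $f_X>0$ this forces $|A|^2+\text{Ric}_G(N)=0$, and as both summands are nonnegative we obtain $\text{Ric}_G(N)=0$ and $A\equiv 0$, which are $(a)$ and $(b)$.

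For $(c)$, the totally geodesic conclusion lets me repeat the subgroup argument in the proof of Theorem \ref{thm:main theorem}: after a left translation I may assume $e\in M$; every geodesic of $M$ through $e$ is a geodesic of $G$, hence a one-parameter subgroup; left-invariant extensions $E_1,\dots,E_n$ of a basis of $T_eM$ are tangent to $M$, and since their brackets are tangent to $M$ at $e$ they span, by left invariance, an $n$-dimensional subalgebra $\mathfrak h$; completeness (Hopf--Rinow) then identifies $\varphi(M)$ with the connected subgroup $H$ integrating $\mathfrak h$. The only place compactness entered was to ensure $\varphi(M)$ is closed, so that Cartan's closed subgroup theorem applies; here I would instead invoke the properness of $\varphi$, which guarantees that $\varphi(M)$ is closed in $G$, and conclude as before that it is an embedded Lie subgroup, so that $\varphi(M)$ is a lateral class of one.
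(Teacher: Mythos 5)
Your proposal is correct, and its skeleton coincides with the paper's: superharmonicity of $f_X$ via Lemma \ref{lemma:Laplacian of support functions} and the nonnegativity of $\text{Ric}_G(N)$ coming from (\ref{eq:sectional curvature}), reduction of everything to the integrability of $|\nabla f_X|$, the key estimate $|\nabla f_X|\leq C|\pi_X(\eta)|$, and, for item (c), the very same subgroup argument of Theorem \ref{thm:main theorem} with properness (proper maps are closed) replacing compactness before the appeal to Cartan's closed subgroup theorem. Where you genuinely diverge is in the two technical steps. First, the paper obtains the key estimate from the coordinate expression of Lemma \ref{lemma:square norm of the gradient}: choosing an orthonormal basis with $X_{n+1}=X$, it bounds the structure-constant sums by Cauchy--Schwarz and uses $\sum_{i=1}^{n}f_i^2=1-f_X^2=|\pi_X(\eta)|^2$ together with (\ref{eq:size of the orthogonal projection II}). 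You instead rewrite Lemma \ref{lemma:gradient of support functions} invariantly as $\nabla f_X=-AX^{\top}+\frac{1}{2}[X,N]^{\top}$, which is indeed correct in the Riemannian case: all $\epsilon_l=1$, and the skew-adjointness of $\text{ad}(X)$ (Weyl's relation (\ref{eq:Weyl's relation})) gives $\sum_i c_{l,n+1}^i f_i=\langle N,[X_l,X]\rangle=\langle[X,N],X_l\rangle$, where $[X,N]_p$ is read through the left-invariant trivialization, i.e., as $\text{ad}(X)$ applied to $\eta(p)$; then $[X,X]=0$ yields $|[X,N]|=|\text{ad}(X)\pi_X(\eta)|\leq\|\text{ad}(X)\|\,|\pi_X(\eta)|$. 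This derivation is slicker than the paper's and makes transparent exactly why $|\pi_X(\eta)|$ controls $|\nabla f_X|$. Second, where the paper quotes Yau's extension of Hopf's theorem (\cite{Yau:76}) to conclude that $f_X$ is constant and then plugs back into the Laplacian formula, you invoke the $L^1$ divergence lemma (which appears in \cite{Caminha:10} rather than \cite{Caminha:11}, a harmless misattribution, and is itself a consequence of Yau's result) applied to $\nabla f_X$ to get $\Delta f_X\equiv 0$ directly; this is marginally more economical, since items (a) and (b) only require $(|A|^2+\text{Ric}_G(N))f_X\equiv 0$ and the strict sign of $f_X$, not the constancy of $f_X$. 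In short: same strategy and same conclusions, with a cleaner, coordinate-free route to the gradient estimate.
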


\begin{proof}
As in the proof of Theorem \ref{thm:main theorem}, we conclude that $f_X$ is superharmonic.
If $|\nabla f_X|$ is integrable on $M$, then, by Yau's extension of Hopf's theorem to complete noncompact Riemannian manifolds 
(cf. \cite{Yau:76}), $f_X$ will be constant on $M$ and, as in the previous proof, we shall get items (a) and (b).

In order to prove that $|\nabla f_X|$ is integrable on $M$ assume, without loss of generality, that $X$ is a unit vector field. Also, 
from the very beginning, take an orthonormal basis $\mathcal B=(X_1,\ldots,X_{n+1})$ for $\mathfrak g$, such that $X_{n+1}=X$ and, thus,
$f_X=f_{n+1}$. By Lemma \ref{lemma:square norm of the gradient}, and taking into account that $c_{l,n+1}^{n+1}=0$ for $1\leq l\leq n+1$
(cf. (\ref{eq:structure constants again})), we get
\begin{equation}\nonumber
 \begin{split} 
|\nabla f_X|^2&\,=|AX^{\top}|^2-\sum_{i,l=1}^n\langle X_l^{\top},AX^{\top}\rangle c_{l,n+1}^if_i\\
&\,\,\,\,\,\,\,+\frac{1}{4}\sum_{i,l,r=1}^nc_{l,n+1}^ic_{l,n+1}^rf_if_r.
 \end{split}
\end{equation}
Since $|X_l^{\top}|\leq 1$ for $1\leq l\leq n+1$, we can write, with the help of Cauchy-Schwarz inequality,
\begin{equation}\nonumber
 \begin{split} 
|\nabla f_X|^2&\,\leq|A|^2|X^{\top}|^2+\sum_{i,l=1}^n|X_l^{\top}||A||X^{\top}||c_{l,n+1}^i||f_i|\\
&\,\,\,\,\,\,\,+\frac{1}{4}\sum_{i,l,r=1}^{n+1}|c_{l,n+1}^i||c_{l,n+1}^r||f_i||f_r|\\
&\,\leq|A|^2|X^{\top}|^2+C|A||X^{\top}|\sum_{i=1}^n|f_i|+\frac{C}{4}\sum_{i,l=1}^n|f_i||f_l|\\
&\,\leq C\left(|A|^2|X^{\top}|^2+|A||X^{\top}|\sum_{i=1}^n|f_i|+\frac{1}{4}\left(\sum_{i=1}^n|f_i|\right)^2\right)\\
&\,=C\left(|A||X^{\top}|+\frac{1}{2}\sum_{i=1}^n|f_i|\right)^2,
 \end{split}
\end{equation}
for some constant $C>0$. Hence, due to the boundedness of $|A|$, (\ref{eq:size of the orthogonal projection I}) and 
(\ref{eq:size of the orthogonal projection II}), we get (incorporating constants)
\begin{equation}\nonumber
 \begin{split} 
|\nabla f_X|&\,\leq C\left(|X^{\top}|+\frac{1}{2}\sum_{i=1}^n|f_i|\right)\leq C\left(|X^{\top}|+\sqrt{\sum_{i=1}^nf_i^2}\right)\\
&\,=C\left(|\pi_X(\eta)|+\sqrt{1-f_X^2}\right)=C|\pi_X(\eta)|.
\end{split}
\end{equation}

Now, assume further that $\varphi$ is proper. As in the previous proof, we get the existence of a Lie subgroup $H$ of $G$, 
such that $H=\varphi(M)$. However, since proper maps are closed, we are done by applying Cartan's closed subgroup theorem once again.
\end{proof}

\begin{remark}
{\em
As quoted at the Introduction, the prototype example of the situation covered by theorems \ref{thm:main theorem} and \ref{thm:second theorem} 
is that of the product Riemannian groups $G^{n+1}=K\times\mathbb R$ and $K\times\mathbb S^1$, where $\mathbb R$ and $\mathbb S^1$ have their 
canonical metrics and $K$ is a Riemannian group.
Moreover, due to de Rham's decomposition theorem, the Riemannian group $G$ of those results turns out to be, in a neighborhood of each point 
of $M$, locally isometric to a Riemannian product of pieces of $H$ and the $1-$parameter subgroup generated by $X$. 
However, this need not be so globally, and a typical example in the compact case is given by taking $G$ and $M$ to be respectively equal
to the unitary group $U(m)$ and the special unitary group $SU(m)$. On the one hand, $U(m)$ is compact and, hence, can be turned into a Riemannian group; on the other hand,
since $SU(m)$ is the kernel of the Lie group homomorphism $\det:U(m)\rightarrow\mathbb S^1$, it is a closed Lie subgroup of $U(m)$, of
codimension $1$. As for the noncompact case, one can take $G$ and $M$ to be respectively equal to the complex orthogonal group $O(n,\mathbb C)$
and the complex special orthogonal group $SO(n,\mathbb C)$ (cf. Section $1.17$ of \cite{Knapp:02}).
} 
\end{remark}

\section{CMC hypersurfaces of Lorentzian groups}\label{section:CMC hypersurfaces of Lorentzian groups}

Along all of this section, $G^{n+1}$ is an $(n+1)-$dimensional Lorentzian group and $\varphi:M^n\to G^{n+1}$ is a connected spacelike
hypersurface of $G$, oriented by the choice of a globally defined timelike unit normal vector field $N$. In what follows, and unless 
otherwise stated, we stick to this context without further notice. We begin extending Theorem \ref{thm:main theorem} to this setting.

\begin{theorem}\label{thm:the compact Lorentzian case}
 Let $G^{n+1}$ be a Lorentzian group and $\varphi:M^n\rightarrow G^{n+1}$ be a compact connected oriented cmc spacelike hypersurface of $G$,
transversal to a timelike element $X$ of $\mathfrak g$. If the mean curvature $H$ of $\varphi$ satisfies
$$H^2\geq-n\text{\rm Ric}_G(N)$$
along $M$, then:
\begin{enumerate}
 \item[$(a)$] The Ricci curvature of $G$ vanishes in the direction of $N$.
 \item[$(b)$] $\varphi$ is totally geodesic
 \item[$(c)$] $\varphi$ is a lateral class of an embedded Lie subgroup of $G$.
\end{enumerate}
\end{theorem}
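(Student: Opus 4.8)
The plan is to transcribe the proof of Theorem~\ref{thm:main theorem}, keeping careful track of the sign coming from $\epsilon_N=(-1)^{\nu}=-1$ in the Lorentzian case. First I would observe that, since $\varphi$ is transversal to the timelike $X$, the support function $f_X=\langle N,X\rangle$ has a strict sign on $M$; in fact this is automatic, for $N$ and $X$ are both timelike and the reverse Cauchy--Schwarz inequality gives $f_X^2\geq|X|^2>0$. Feeding the cmc hypothesis into Lemma~\ref{lemma:Laplacian of support functions} with $\epsilon_N=-1$ then yields
\[
\Delta f_X=(|A|^2+\text{Ric}_G(N))f_X.
\]

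The crux of the argument — and the step I expect to be the main obstacle — is the \emph{nonnegativity} of the coefficient $|A|^2+\text{Ric}_G(N)$, which reduces to showing that $\text{Ric}_G(N)\geq0$ whenever $N$ is timelike. This is precisely where the timelike character of $N$ and the bi-invariance of the metric cooperate, and it has no Riemannian analogue to copy. By Weyl's relation \eqref{eq:Weyl's relation} the operator $\text{ad}(N)$ is skew-adjoint, and the computation $\langle[N,Y],N\rangle=\langle N,[Y,N]\rangle=-\langle[N,Y],N\rangle=0$ shows that the image of $\text{ad}(N)$ lies in the \emph{spacelike} (hence positive definite) hyperplane $N^{\perp}$, on which $\text{ad}(N)$ therefore restricts to a genuinely skew-symmetric operator. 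Consequently $\text{tr}(\text{ad}(N)^2)=\text{tr}((\text{ad}(N)|_{N^{\perp}})^2)\leq0$, and \eqref{eq:Ricci tensor and Killing form} gives $\text{Ric}_G(N)=-\tfrac14 B(N,N)=-\tfrac14\text{tr}(\text{ad}(N)^2)\geq0$. (Equivalently, each bracket $[N,E_i]$ being spacelike, \eqref{eq:sectional curvature} yields $\text{Ric}_G(N)=\tfrac14\sum_i|[N,E_i]|^2\geq0$.) I would note in passing that this already makes the stated hypothesis $H^2\geq-n\,\text{Ric}_G(N)$ automatically satisfied; alternatively, one reaches the same nonnegativity by combining that hypothesis with the elementary bound $|A|^2\geq H^2/n$, valid since $A$ is self-adjoint for the positive definite induced metric and hence has real eigenvalues.

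With the coefficient nonnegative and $f_X$ of strict sign, $f_X$ is subharmonic or superharmonic according to that sign, so Hopf's theorem on the compact $M$ forces $f_X$ to be constant. Plugging this back in shows $|A|^2+\text{Ric}_G(N)\equiv0$, and since both summands are nonnegative each must vanish: thus $\text{Ric}_G(N)=0$, which is item (a), and $|A|^2=0$, i.e.\ $\varphi$ is totally geodesic, which is item (b). (If one instead runs the argument through the hypothesis and the bound $|A|^2\geq H^2/n$, the equalities give $\varphi$ umbilical with $\text{Ric}_G(N)=-H^2/n$, and then the sign $\text{Ric}_G(N)\geq0$ of the previous paragraph forces $H=0$, recovering total geodesy — so the Ricci sign is indispensable either way.)

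Finally, item (c) follows exactly as in the last paragraphs of the proof of Theorem~\ref{thm:main theorem}: after composing with a left translation one may assume $e\in M$; total geodesy together with bi-invariance forces the maximal geodesics of $M$ issuing from $e$ to be one-parameter subgroups of $G$; a basis of $T_eM$ extends to left-invariant fields tangent to $M$, which therefore generate a Lie subalgebra $\mathfrak h\subseteq\mathfrak g$; the connected subgroup $H$ integrating $\mathfrak h$ satisfies $H=\varphi(M)$; and compactness (hence closedness) lets Cartan's closed subgroup theorem conclude that $H$ is embedded. The only genuinely new difficulty relative to the Riemannian case is the sign of $\text{Ric}_G(N)$ for the timelike normal treated above, since every other ingredient transcribes mechanically once $\epsilon_N=-1$ is accounted for.
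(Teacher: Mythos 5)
Your proposal is correct, and its core is genuinely different from the paper's argument. The paper never establishes a sign for $\mathrm{Ric}_G$ on timelike directions: it uses the hypothesis $H^2\geq -n\,\mathrm{Ric}_G(N)$ together with $|A|^2\geq H^2/n$ to make $f_X$ superharmonic, concludes from Hopf's theorem only that $\varphi$ is totally \emph{umbilical}, and then must rule out $H\neq 0$ by separate machinery: from $\nabla f_X=0$ and Lemma \ref{lemma:gradient of support functions} it extracts the identity \eqref{eq:auxiliar 1 to the compact Lorentzian case}, applies Lemma \ref{lemma:Laplacian of support functions} and Hopf again to show all the $f_i$ are constant, deduces that $N$ is the restriction of a left-invariant field, realizes $M$ as a leaf of the integrable distribution spanned by an orthonormal completion $(E_1,\ldots,E_n)$, and gets a contradiction with $H\neq0$ because the leaf is totally geodesic. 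Your pointwise inequality $\mathrm{Ric}_G(N,N)\geq 0$ for timelike $N$ --- via skew-adjointness of $\mathrm{ad}(N)$, the spacelikeness of its image, and \eqref{eq:Ricci tensor and Killing form} --- shortcuts all of this: the coefficient $|A|^2+\mathrm{Ric}_G(N)$ is nonnegative outright, Hopf gives constancy, and vanishing of a sum of two nonnegative terms yields (a) and (b) at once, with (c) then identical to the paper's (and to Theorem \ref{thm:main theorem}(c)). I verified your key claim independently: for unit timelike $N$ and an adapted orthonormal basis, $\mathrm{Ric}_G(N,N)=\frac14\sum_k|[N,E_k]|^2\geq 0$, with equality iff $\mathrm{ad}(N)=0$, i.e.\ iff $\eta(p)$ is central --- which even sharpens (a) and is consistent with Lemma \ref{lemma:algebraic lemma}. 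What your route buys is a strictly stronger statement: the mean-curvature hypothesis is superfluous. One calibration on your parenthetical that the hypothesis is ``automatically satisfied'': the paper uses $\mathrm{Ric}_G(N)$ with two conventions --- in Lemma \ref{lemma:Laplacian of support functions} it is the tensor value $\mathrm{Ric}_G(N,N)$, while item (ii) of the closing Remarks reports the directional value $\epsilon_N\mathrm{Ric}_G(N,N)$ (it states $-\frac m4$ for the oscillator group, whereas $B(V,V)=-m$ gives $\mathrm{Ric}_G(V,V)=+\frac m4$). Under the first convention your inequality indeed makes the hypothesis vacuous; under the second it is a genuine largeness condition --- but your proof shows it is unnecessary either way, and the same observation bears on Theorem \ref{thm:extending a theorem of Xin}. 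Finally, two small merits of your write-up: the remark that transversality to a timelike $X$ is automatic (reverse Cauchy--Schwarz) is correct and absent from the paper, and your hybrid variant silently corrects what is evidently a typo in the paper's proof, which asserts $\mathrm{Ric}_G(N)=\frac{H^2}{n}$ where $-\frac{H^2}{n}$ is meant.
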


\begin{proof}
Without loss of generality, we can ask that $|X|=1$ and $f_X=\langle X,N\rangle<0$ on $M$. On the other hand, it follows from Lemma 
\ref{lemma:Laplacian of support functions} that
$$\Delta f_X=(|A|^2+\text{\rm Ric}_G(N))f_X.$$

Now, Cauchy-Schwarz inequality gives $|A|^2\geq\frac{H^2}{n}$, with equality only at umbilical points. Thus,
$$\Delta f_X\leq\left(\text{\rm Ric}_G(N)+\frac{H^2}{n}\right)f_X\leq 0,$$
so that $f_X$ is a superharmonic function on the compact Riemannian manifold $M$. Therefore, Hopf's theorem guarantees that $f_X$ is constant;
in turn, this gives $|A|^2=\frac{H^2}{n}$ and $\text{\rm Ric}_G(N)=\frac{H^2}{n}$ on $M$, from which it follows that $\varphi$ is totally 
umbilical.

Write $A=\lambda\text{Id}$, where $\lambda=-\frac{H}{n}$ and $\text{Id}$ refers to the identity homomorphism on $TM$. Also, fix an 
orthonormal basis $\mathcal B$ of $\mathfrak g$ in such a way that $X=X_{n+1}$. It follows from the above, (\ref{lemma:gradient of support functions}) and $c_{n+1,n+1}^i=0$ that
$$0=\nabla f_X=-\lambda X^{\top}+\frac{1}{2}\sum_{l=1}^n\left(\sum_{i=1}^{n+1}c_{l,n+1}^i\epsilon_lf_i\right)X_l^{\top},$$
so that
\begin{equation}\label{eq:auxiliar 1 to the compact Lorentzian case}
-\lambda X+\frac{1}{2}\sum_{l=1}^n\left(\sum_{i=1}^{n+1}c_{l,n+1}^i\epsilon_lf_i\right)X_l=\alpha N,
\end{equation}
for some function $\alpha:M\rightarrow\mathbb R$. Taking the scalar product of both members of the above equality with $X$, we get
$$\lambda=-\lambda\langle X,X\rangle=\alpha\langle N,X\rangle=\alpha f_X.$$

If $H\neq 0$, then $\alpha=\frac{\lambda}{f_X}=-\frac{H}{nf_X}$, a nonzero constant on $M$. On the other hand, by applying Lemma 
\ref{lemma:Laplacian of support functions} again, we get
$$\Delta f_i=(|A|^2+\text{\rm Ric}_G(N))f_i=0$$
for $1\leq i\leq n$, so that Hopf's theorem gives $f_i$ constant on $M$, for all such $i$. Then, 
(\ref{eq:auxiliar 1 to the compact Lorentzian case}) assures that $N$ is the restriction of an element of $\mathfrak g$ to $M$. 
Let $E$ be such element, and choose an orthonormal basis $\mathcal B'=(E_1,\ldots,E_n,E)$ of $\mathfrak g$. Then, the restrictions of 
$E_1,\ldots,E_n$ to $M$ are tangent to $M$ and, since $M$ is a hypersurface of $G$, we get $[E_i,E_j]_{|M}\in\mathfrak X(M)$, for all 
$1\leq i,j\leq n$. It thus follows that the distribution of $G$ generated by 
$E_1,\ldots,E_n$ is integrable. Therefore, an argument similar to that of item (c) of Theorem \ref{thm:main theorem} guarantees that
$M$ is a leaf of the corresponding foliation and, hence, totally geodesic. This contradicts the fact that $H\neq 0$. 

We conclude that $H=0$ and $\varphi$ is totally geodesic. By invoking again the argument of item (c) of Theorem \ref{thm:main theorem}, we 
finish the proof of item (c) and, thus, of the theorem. 
\end{proof}

In order to get a similar result in the noncompact case, we have to ask more from the Lorentzian group $G$, as well as to
control the size of the Gauss map of the hypersurface. 

We begin, by extending the notion of Gauss map to spacelike hypersurfaces of Lorentzian groups. To this end, 
let $X$ be a given timelike element of $\mathfrak g$. Since $T_eG$ is isometric to $\mathbb L^{n+1}$, we define the 
hyperbolic space of $T_eG$ with respect to $X_e$ as
$$\mathbb H^n(T_eG)=\{v\in T_eG;\,\langle v,v\rangle=-1\,\,\text{and}\,\,\langle v,X_e\rangle<0\}.$$

By changing $N$ to $-N$, if necessary, we may suppose that $N$ is in the same time-orientation as $X$, i.e., that
$f_X=\langle X,N\rangle<0$. Due to this choice, the Gauss map $\eta:M^n\rightarrow\mathbb H^n(T_eG)$ of $\varphi$ with respect to $N$ and 
$X$ is, then, defined formally in exactly the same way as was done right after Theorem \ref{thm:main theorem} for the Riemannian case. From
now on, unless explicitly stated otherwise, we assume that $f_X<0$.

The coming result extends, for Lorentzian groups, a theorem of Y. L. Xin (cf. \cite{Xin:91}) on complete spacelike hypersurfaces of 
$\mathbb L^{n+1}$. Prior to stating and proving it, we need the following auxiliar result.

\begin{lemma}
Let $G^{n+1}$, $n\geq 2$, be a Lorentzian group and $\varphi:M^n\rightarrow G^{n+1}$ be a connected cmc spacelike hypersurface of $G$, 
transversal to a timelike element $X$ of $\mathfrak g$ and oriented by the choice of a timelike unit vector field $N$. If the image of 
the Gauss map of $\varphi$ with respect to $N$ and $X$ is bounded, then
$$\inf_M\text{\rm Ric}_G(N)>-\infty.$$
\end{lemma}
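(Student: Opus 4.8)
The plan is to express $\text{Ric}_G(N)$ in terms of the Gauss map and show that boundedness of the latter forces a lower bound on the former. Since $N_p$ is the left translate of $\eta(p)\in\mathbb H^n(T_eG)$, and left translations are isometries, we have $\text{Ric}_G(N_p)=\text{Ric}_G(\eta(p))$ where on the right we regard $\eta(p)$ as an element of $\mathfrak g$ via the canonical identification $T_eG\cong\mathfrak g$. By formula (\ref{eq:Ricci tensor and Killing form}), $\text{Ric}_G(N_p)=-\tfrac14 B(\eta(p),\eta(p))$, so the quantity we must bound below is a continuous function of $\eta(p)$ alone. The strategy is therefore to exhibit $\text{Ric}_G$ as the restriction to $\mathbb H^n(T_eG)$ of a fixed continuous function on $T_eG$, and then invoke compactness.

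First I would fix the timelike element $X$ and the identification $T_eG\cong\mathfrak g$, and define $\rho:\mathfrak g\to\mathbb R$ by $\rho(v)=\text{Ric}_G(v)=-\tfrac14 B(v,v)$; this is a fixed quadratic form, hence continuous. By (\ref{eq:expression for the Gauss map}) and the isometry property of left translations, $\text{Ric}_G(N_p)=\rho(\eta(p))$ for every $p\in M$, so
\begin{equation}\nonumber
\inf_M\text{Ric}_G(N)=\inf_{p\in M}\rho(\eta(p))=\inf_{v\in\eta(M)}\rho(v).
\end{equation}
Next I would use the hypothesis that $\eta(M)$ is bounded in $\mathbb H^n(T_eG)\subset T_eG$. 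A continuous function on a bounded set need not attain its infimum, but its infimum over the closure $\overline{\eta(M)}$ is attained, and $\overline{\eta(M)}$ is compact since it is a closed and bounded subset of the finite-dimensional space $T_eG$. Hence $\inf_{v\in\eta(M)}\rho(v)\geq\min_{v\in\overline{\eta(M)}}\rho(v)>-\infty$, which is the desired conclusion.

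The only genuinely substantive point is the reduction $\text{Ric}_G(N_p)=\rho(\eta(p))$, which rests on two facts: that left translation $L_{p^{-1}}$ is an isometry of $G$ (so it preserves the Ricci tensor evaluated on unit-length arguments), and that $(L_{p^{-1}})_*N_p=\eta(p)$ by definition of the Gauss map in (\ref{eq:expression for the Gauss map}). Combined with the algebraic identity (\ref{eq:Ricci tensor and Killing form}) expressing $\text{Ric}_G$ through the Killing form, this makes $\text{Ric}_G(N)$ literally a quadratic expression in the bounded vector $\eta(p)$. The remaining topological step—closed and bounded implies compact in a finite-dimensional normed space, whence a continuous function attains a minimum—is routine, so I expect no real obstacle beyond carefully setting up this identification. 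One caveat to handle is that $\rho$ should be read as $\text{Ric}_G$ evaluated on the \emph{unit} normal; since every $\eta(p)$ already satisfies $\langle\eta(p),\eta(p)\rangle=-1$ by definition of $\mathbb H^n(T_eG)$, no normalization issue arises and the lower bound follows directly.
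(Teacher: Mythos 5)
Your proof is correct, and it takes a genuinely different route from the paper's. The paper argues coefficientwise: it fixes the orthonormal basis $(X_1,\ldots,X_n,X_{n+1}=X)$, writes $N=-f_XX+\sum_{i=1}^nf_iX_i$, expands $\text{Ric}_G(N)$ bilinearly into terms $\text{Ric}_G(X)$, $\text{Ric}_G(X_i,X_j)$, $\text{Ric}_G(X,X_i)$, bounds these below via the triangle and Cauchy--Schwarz inequalities using that each such coefficient is constant on $G$ (left invariance), and then feeds in the boundedness of the Gauss map through $-c\leq f_X\leq-1$ as in (\ref{eq:definition of c}) together with $\sum_{i=1}^nf_i^2=f_X^2-1$. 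You instead invoke homogeneity once and for all: since $L_{p^{-1}}$ is an isometry carrying $N_p$ to $\eta(p)$ by (\ref{eq:expression for the Gauss map}), and isometries preserve the Ricci tensor, $\text{Ric}_G(N_p)$ is the fixed quadratic form $\text{Ric}_{G,e}$ evaluated at $\eta(p)$, and compactness of $\overline{\eta(M)}$ in the finite-dimensional space $T_eG$ finishes the argument. Both proofs rest on the same underlying fact---left invariance of the Ricci data---but you exploit it through the isometry group rather than entry by entry, which eliminates all the estimates and is shorter and more conceptual; the paper's computation, in exchange, yields an explicit lower bound in terms of $c$ and the Ricci coefficients in the basis, though your argument is just as easily made quantitative, since a quadratic form is explicitly bounded on a bounded set. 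Two small remarks: you do not actually need the Killing-form identity (\ref{eq:Ricci tensor and Killing form})---$\text{Ric}_{G,e}$ is a fixed symmetric bilinear form on $T_eG$ in any case, so continuity of your $\rho$ is automatic (though citing it is harmless and valid here, since the identity holds for any bi-invariant metric); and, like the paper's proof, your argument never uses the cmc hypothesis, and uses transversality and the sign convention $f_X<0$ only so that the Gauss map into $\mathbb H^n(T_eG)$ is defined, which is consistent with how the lemma is stated for use in Theorem \ref{thm:extending a theorem of Xin}.
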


\begin{proof}
Let $(X_1,\ldots,X_n,X_{n+1}=X)$ be an orthonormal basis of $\mathfrak g$ and write $N=-f_XX+\sum_{i=1}^nf_iX_i$,
so that 
$$-f_X^2+\sum_{i=1}^nf_i^2=-1$$
and
\begin{equation}\nonumber
\begin{split}
\text{Ric}_G(N)
&\,=\text{Ric}_G\left(-f_XX+\sum_{i=1}^nf_iX_i,-f_XX+\sum_{j=1}^nf_jX_j\right)\\
&\,=f_X^2\text{Ric}_G(X)+\sum_{i,j=1}^nf_if_j\text{Ric}_G(X_i,X_j)\\
&\,\,\,\,\,\,\,-2f_X\sum_{i=1}^nf_i\text{Ric}_G(X,X_i).
\end{split}
\end{equation}

Since $f_X<0$, we get from the triangle inequality and Cauchy-Schwarz inequality that
\begin{equation}\nonumber
\begin{split}
\text{Ric}_G(N)&\,\geq f_X^2\text{Ric}_G(X)-\sum_{i,j=1}^n|f_if_j||\text{Ric}_G(X_i,X_j)|\\
&\,\,\,\,\,\,\,+2f_X\sum_{i=1}^n|f_i||\text{Ric}_G(X,X_i)|\\
&\,\geq f_X^2\text{Ric}_G(X)-\max_{1\leq i,j\leq n}|\text{Ric}_G(X_i,X_j)|\sum_{i,j=1}^n|f_if_j|\\
&\,\,\,\,\,\,\,+2f_X\sum_{i=1}^n|f_i||\text{Ric}_G(X,X_i)|\\
&\,\geq f_X^2\text{Ric}_G(X)-\max_{1\leq i,j\leq n}|\text{Ric}_G(X_i,X_j)|\left(\sum_{i=1}^n|f_i|\right)^2\\
&\,\,\,\,\,\,\,+2f_X\left(\sum_{i=1}^n|f_i|\right)^{1/2}\left(\sum_{i=1}^n|\text{Ric}_G(X,X_i)|\right)^{1/2}\\
&\,\geq f_X^2\text{Ric}_G(X)-n\cdot\max_{1\leq i,j\leq n}|\text{Ric}_G(X_i,X_j)|\sum_{i=1}^nf_i^2\\
&\,\,\,\,\,\,\,+2nf_X\left(\sum_{i=1}^nf_i^2\right)\left(\sum_{i=1}^n|\text{Ric}_G(X,X_i)|\right)^{1/2}\\
\end{split}
\end{equation}

Now, it is enough to observe that all of $\text{Ric}_G(X)$, $\text{Ric}_G(X_i,X_j)$ and $\text{Ric}_G(X,X_i)$ are constant on $G$, substitute 
$\sum_{i=1}^nf_i^2=f_X^2-1$ and take into account that the boundedness of the image of the Gauss map is equivalent to the fact that 
\begin{equation}\label{eq:definition of c}
c:=-\inf_M\langle X,N\rangle<+\infty
\end{equation}
and, hence, that $-c\leq f_X\leq-1$.
\end{proof}

\begin{theorem}\label{thm:extending a theorem of Xin}
Let $G^{n+1}$, $n\geq 2$, be a Lorentzian group with sectional curvatures bounded from above on Lorentzian planes.
Let $\varphi:M^n\rightarrow G^{n+1}$ be a complete connected oriented cmc spacelike hypersurface of $G$,
transversal to a timelike element $X$ of $\mathfrak g$. If the image of the Gauss map of $\varphi$ with respect to $N$ and $X$ is bounded and
the mean curvature $H$ of $\varphi$ is such that
$$H^2\geq-n\inf_M\text{\rm Ric}_G(N),$$
then $\varphi$ is totally umbilical and $H^2=-n\inf_M\text{\rm Ric}_G(N)$. Moreover, if $\varphi$ is not totally geodesic, then:
\begin{enumerate}
\item[$(a)$] $\varphi(M)$ is a lateral class of a Lie subgroup of $G$;
\item[$(b)$] $M$ is isometric to a hyperbolic space form.
\end{enumerate}
\end{theorem}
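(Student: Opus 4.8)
The plan is to carry out the support-function analysis of Theorem~\ref{thm:the compact Lorentzian case}, replacing Hopf's theorem (unavailable since $M$ is noncompact) by a maximum principle valid on complete manifolds. First I would normalize $X$ so that $|X|=1$ and, after possibly reversing $N$, so that $f_X=\langle X,N\rangle<0$; the preceding auxiliary lemma then furnishes, via (\ref{eq:definition of c}), the two-sided bound $-c\le f_X\le -1$ and the finiteness $\inf_M\text{\rm Ric}_G(N)>-\infty$. Since $\epsilon_N=-1$ and $H$ is constant, Lemma~\ref{lemma:Laplacian of support functions} gives $\Delta f_X=(|A|^2+\text{\rm Ric}_G(N))f_X$. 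Writing $A=-\frac{H}{n}\text{Id}+\Phi$ with $\Phi$ the traceless part of $A$, so that $|A|^2=\frac{H^2}{n}+|\Phi|^2$, and using the hypothesis $H^2\ge -n\inf_M\text{\rm Ric}_G(N)$, the coefficient splits as
$$|A|^2+\text{\rm Ric}_G(N)=|\Phi|^2+\Big(\tfrac{H^2}{n}+\text{\rm Ric}_G(N)\Big)\ge|\Phi|^2\ge 0,$$
so that $u:=-f_X\in[1,c]$ is a bounded, subharmonic function with $\Delta u=(|A|^2+\text{\rm Ric}_G(N))\,u$.

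To run Omori--Yau I must first bound $\text{\rm Ric}_M$ from below. By the Gauss equation for the spacelike immersion, the contribution of the second fundamental form to $\text{\rm Ric}_M(v,v)$, for a unit $v$, is $|Av|^2-(\text{\rm tr}\,A)\langle Av,v\rangle$, which upon completing the square is $\ge-\frac{H^2}{4}$ using only that $H$ is constant; the ambient contribution is $\widetilde{\text{\rm Ric}}(v,v)+\langle R_G(N,v)v,N\rangle$, where the first term is bounded by left invariance and $\langle R_G(N,v)v,N\rangle=-K_G(N,v)$ is bounded below because $\text{span}\{N,v\}$ is a Lorentzian plane and $G$ has sectional curvatures bounded from above on such planes. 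Hence $\text{\rm Ric}_M$ is bounded below, and the Omori--Yau principle applies to the bounded function $u$: there is a sequence $(p_k)$ with $u(p_k)\to\sup_M u\ge 1$ and $\limsup_k\Delta u(p_k)\le 0$. Evaluating $\Delta u=\big(|\Phi|^2+\frac{H^2}{n}+\text{\rm Ric}_G(N)\big)u$ along $(p_k)$ and using that both bracketed summands are nonnegative forces $|\Phi|(p_k)\to 0$ and $\text{\rm Ric}_G(N)(p_k)\to-\frac{H^2}{n}$; the latter yields $\inf_M\text{\rm Ric}_G(N)\le-\frac{H^2}{n}$, which together with the hypothesis gives the asserted equality $H^2=-n\inf_M\text{\rm Ric}_G(N)$.

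The main obstacle is to upgrade this sequential information to the pointwise statement that $\varphi$ is totally umbilical, i.e. $|\Phi|\equiv 0$ on all of $M$, since Omori--Yau only produces a maximizing sequence. The route I would pursue is to show that $M$ is parabolic, so that the bounded subharmonic function $u$ must be constant; then $\Delta u\equiv 0$ forces $|A|^2+\text{\rm Ric}_G(N)\equiv 0$, whence, by the splitting above and the sign condition, both $|\Phi|\equiv 0$ (total umbilicity) and $\text{\rm Ric}_G(N)\equiv-\frac{H^2}{n}$. I expect establishing parabolicity to be the delicate point, to be extracted from a volume-growth estimate for $M$ built on the boundedness of the hyperbolic Gauss map together with the constancy of $H$, in the spirit of Xin's original argument.

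Finally, for the ``moreover'' part I would mimic item (c) of Theorem~\ref{thm:the compact Lorentzian case}. Once $M$ is parabolic, each coordinate $f_i$ of $N$ in a fixed orthonormal basis of $\mathfrak g$ is a bounded harmonic function, hence constant, and (\ref{eq:auxiliar 1 to the compact Lorentzian case}) then shows that $N$ is the restriction to $M$ of an element $E\in\mathfrak g$. Completing $E$ to an orthonormal basis $(E_1,\dots,E_n,E)$ of $\mathfrak g$, the restrictions of $E_1,\dots,E_n$ span an integrable distribution whose leaf through the identity is $\varphi(M)$, and the argument of Theorem~\ref{thm:main theorem}(c) exhibits $\varphi(M)$ as a lateral class of a Lie subgroup of $G$, proving (a). For (b), total umbilicity with $A=-\frac{H}{n}\text{Id}$ and $\frac{H}{n}\neq 0$ constant, inserted into the Gauss equation together with the explicit leaf geometry, yields a constant negative sectional curvature for $M$; completeness and connectedness then identify $M$ with a hyperbolic space form.
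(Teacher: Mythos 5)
Your first half coincides with the paper's proof: after the auxiliary lemma gives $-c\le f_X\le-1$ and $\lambda:=\inf_M\text{\rm Ric}_G(N)>-\infty$, both you and the paper bound $\text{\rm Ric}_M$ from below via the Gauss equation and apply Omori--Yau to force $\frac{H^2}{n}+\lambda=0$, i.e.\ $H^2=-n\inf_M\text{\rm Ric}_G(N)$. The genuine gap is the step you yourself flag: you never establish parabolicity of $M$, and in fact you cannot, because it is \emph{false} in precisely the case the theorem is designed to capture --- when $\varphi$ is not totally geodesic, conclusion (b) says $M$ is a hyperbolic space form, and $\mathbb H^n$ is non-parabolic (it carries bounded nonconstant subharmonic, indeed harmonic, functions). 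Worse, your strategy would prove too much: if $u=-f_X$ were constant, then $\nabla f_X\equiv 0$, and your intended continuation (constant $f_i$'s from parabolicity, equation (\ref{eq:auxiliar 1 to the compact Lorentzian case}), $N$ the restriction of an element of $\mathfrak g$, an integrable distribution with totally geodesic leaves) is exactly the mechanism that, in Theorem \ref{thm:the compact Lorentzian case}, produces a \emph{contradiction} with $H\neq 0$; run on a noncompact $M$ it would force $H=0$ always and empty out item (b). The paper does none of this: umbilicity is read off from the equality analysis of the differential inequality $\Delta f_X\le\left(\frac{H^2}{n}+\lambda\right)f_X$, and the ``moreover'' part is handled as a dichotomy through the Gauss map rather than through lateral classes in both branches --- if $H=0$, then $A\equiv 0$, so by (\ref{eq:differential of the Gauss map}) $\eta$ is constant, $N$ is the restriction of an element of $\mathfrak g$, and the compact-case argument yields the lateral class; if $H\neq 0$, then $(\eta_p)_*=-\frac{H}{n}((L_{p^{-1}})_*)_p$ exhibits $\eta:M^n\rightarrow\mathbb H^n(T_eG)$ as a local homothety, whence $M$ has constant negative sectional curvature and is a hyperbolic space form. (The paper's proof thus establishes (a) in the totally geodesic case and (b) in the $H\neq 0$ case, in line with the ``either/or'' phrasing of the abstract and introduction; following the literal enumerate of the statement and trying to get (a) and (b) simultaneously when $H\neq 0$ is what led your last paragraph into the inconsistency above.)

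A secondary flaw is your justification of the Ricci lower bound for $M$: you assert that the ambient term $\widetilde{\text{\rm Ric}}(v,v)$ is ``bounded by left invariance''. Left invariance only makes $\text{\rm Ric}_G$ a fixed bilinear form on $\mathfrak g$; since the unit vectors of a Lorentzian scalar product form a \emph{noncompact} set, this does not bound $\text{\rm Ric}_G$ on unit tangent vectors --- that is exactly why the paper needs the auxiliary lemma, with the bounded Gauss image as hypothesis, even to guarantee $\inf_M\text{\rm Ric}_G(N)>-\infty$. The claim is salvageable: the bound $-c\le f_X\le-1$ confines $N$, and hence every unit $v\in TM$, to a compact set of directions in $\mathfrak g$, and only then does left invariance give a bound. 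The paper sidesteps the issue by phrasing its Gauss-equation estimate directly in terms of $\inf_M\text{\rm Ric}_G(N)$ and $\sup K_G$ over Lorentzian planes, both finite by the lemma and by hypothesis.
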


\begin{proof}
Let $\lambda=\inf_M\text{\rm Ric}_G(N)$, so that $\lambda>-\infty$, by the previous lemma.
As in the proof of Theorem \ref{thm:the compact Lorentzian case}, 
we get
\begin{equation}\label{eq:auxiliar to extending a theorem of Xin}
\Delta f_X\leq\left(\frac{H^2}{n}+\lambda\right)f_X,
\end{equation}
with equality at some point of $M$ if, and only if, $\varphi$ is totally umbilical.

In order to analyse the differential inequality above, use the boundedness of the Gauss map of $\varphi$ to choose $c$ as in
(\ref{eq:definition of c}). Suppose, for the moment, that $M$ has Ricci curvature bounded from below (we shall soon justify this claim); 
since it is complete, we can invoke Omori-Yau's maximum principle (cf. \cite{Yau:75}) to get a sequence $(p_k)_{k\geq 1}$ of points of $M$, such that 
$$f_X(p_k)\stackrel{k}{\longrightarrow}-c\ \ \text{and}\ \ \liminf_k\Delta f_X(p_k)\geq 0.$$
If we now remember that $\frac{H^2}{n}+\lambda\geq 0$, evaluate (\ref{eq:auxiliar to extending a theorem of Xin}) at $p_k$ and make 
$k\to+\infty$, we get
$$0\leq\liminf_k\Delta f_X(p_k)\leq\left(\frac{H^2}{n}+\lambda\right)f_X(p_k)\leq-c\left(\frac{H^2}{n}+\lambda\right).$$
This is a contradiction, unless $\frac{H^2}{n}+\lambda=0$, so that $\varphi$ is totally umbilical.

If $H=0$, then $\varphi$ is totally geodesic and, from (\ref{eq:differential of the Gauss map}), the Gauss map $\eta$ is constant on $M$. Thus,
(\ref{eq:expression for the Gauss map}) guarantees that the unit normal vector field $N$ is the restriction of an element of $\mathfrak g$
to $M$. From this point on, if we reason as in the next to last paragraph in the proof of Theorem \ref{thm:the compact Lorentzian case}, 
we conclude that $\varphi(M)$ is a lateral class of a Lie subgroup of $G$.

If $H\neq 0$, it follows from (\ref{eq:differential of the Gauss map}) that $(\eta_p)_*=-\frac{H}{n}((L_{p^{-1}})_*)_p,$
for all $p\in M$ and, hence, $\eta:M^n\to\mathbb H^n(T_eG)$ is a local homothety. Therefore, $M^n$ has constant negative sectional curvatures 
and, thus, is a hyperbolic space form.

To what was left to prove, let $p\in M$ be given and $(e_1,\ldots,e_n)$ be an orthonormal basis of $T_pM$. For a unit vector $v\in T_pM$, it 
follows from Gauss' equation, Cauchy-Schwarz inequality and the previous lemma that
\begin{equation}\nonumber
 \begin{split}
  \text{Ric}_M(v)&\,=\text{Ric}_G(N_p)-K_G(v,N_p)+\frac{H}{n}\langle Av,v\rangle+\sum_{k=1}^n\langle Av,e_k\rangle^2\\
&\,\geq\inf_M\text{Ric}_G(N)-K_G(v,N_p)-\frac{|H|}{n}|Av|+|Av|^2\\
&\,\geq\inf_M\text{Ric}_G(N)-\sup_{\sigma\,\,\text{Lorentz}}K_G(\sigma)-\frac{H^2}{4n}>-\infty.
 \end{split}
\end{equation}

\end{proof}

\begin{remarks}
{\em 
We shall now give some classes of examples to which we can apply Theorems \ref{thm:the compact Lorentzian case} and 
\ref{thm:extending a theorem of Xin}.\\

\noindent (i) The prototype example related to theorems \ref{thm:the compact Lorentzian case} and \ref{thm:extending a theorem of Xin} is a product 
Lorentzian group
$$G^{n+1}=-\mathbb R\times H^n,$$ 
i.e., the direct product of $\mathbb R$ and an $n-$dimensional Riemannian group $H^n$, furnished with the usual Lorentzian product structure.
Indeed, let $\partial_t$ denote the canonical basis of $\mathbb R$ and $\{X,Y\}$ be a set of orthonormal elements of the Lie algebra 
$\mathfrak h$ of $H$; the timelike directions on $G$ are those generated by $\partial_t$ or $a\partial_t+X$, where $a\in\mathbb R$ is such that
$a^2>1$. Therefore, if $\sigma$ is a Lorentzian plane on $G$, then $\sigma$ admits an orthogonal basis of one of the following two forms: 
$(\partial_t,Y)$ or $(a\partial_t+X,Y)$, where $a$, $X$ and $Y$ are as above. This being said, it follows from 
(\ref{eq:sectional curvature}) that
$$K_G(\partial_t,Y)=0\ \ \text{and}\ \ K_G(a\partial_t+X,Y)=\frac{K_H(X,Y)}{4(1-a^2)}\leq 0.$$

As it happens to all lateral classes of subgroups of semi-Riemannian groups, the slices $M_t^n:=\{t\}\times H^n$ are totally geodesic 
hypersurfaces of $G$. In our case, $M_t$ is also spacelike, oriented by the unit normal vector field $\partial_t$, such that 
$\text{Ric}_G(\partial_t)=0$. Finally, $M_t$ is compact (resp. complete) whenever $H^n$ is compact (resp. complete).\\


\noindent (ii) With respect to nonabelian Lorentzian groups, it was proved by A. Medina (cf. \cite{Medina:85}) that the only nonabelian simply 
connected solvable Lie groups which can be turned into Lorentzian groups are the oscillator groups $G_m$ ($m\in\mathbb N$) 
and their direct products with Euclidean spaces. 

For a given $m\in\mathbb N$, the $(2m+2)-$dimensional Lie group $G_m$ can be succintly described as follows (cf. \cite{Gadea:99}): its 
Lie algebra $\mathfrak g_m$ has basis $(P,X_1,\ldots,X_m,Y_1,\ldots,Y_m,Q)$, with nontrivial brackets $[X_i,Y_j]=P$, $[Q,X_j]=Y_j$ and 
$[Q,Y_j]=-X_j$. The nontrivial scalar products of the corresponding bi-invariant metric tensor are
$\langle P,Q\rangle=1$ and $\langle X_i,X_j\rangle=\langle Y_i,Y_j\rangle=\delta_{ij}$, so that
 an orthonormal basis for $\mathfrak g_m$ is given by $(U,X_1,\ldots,X_m,Y_1,\ldots,Y_m,V)$, where $U=\frac{1}{\sqrt 2}(P+Q)$ and
$V=\frac{1}{\sqrt 2}(P-Q)$. The timelike directions of $\mathfrak g_m$ are those generated by $V$, $aV+U$, $aV+X_i$ or $aV+Y_i$, for some
$1\leq i\leq m$ and $a\in\mathbb R$ such that $|a|>1$. By using (\ref{eq:sectional curvature}) again, we can promptly check that
$$K_{G_m}(V,U)=0,\ \ K_{G_m}(V,X_i)=K_{G_m}(V,Y_i)=-\frac{1}{8},$$
$$K_{G_m}(aV+U,X_i)=K_{G_m}(aV+U,X_i)=\frac{1-a}{8(1+a)},$$
$$K_{G_m}(aV+X_i,U)=\frac{1}{8(1-a^2)}$$
and
$$K_{G_m}(aV+X_i,Y_j)=K_{G_m}(aV+Y_i,X_j)=\frac{a^2}{8(1-a^2)}.$$
Then, as it happened in (a), the sectional curvature of $G_m$ along Lorentzian planes are bounded from above.

Finally, it is immediate to verify that the vector subspace $\mathfrak h$ of $\mathfrak g_m$, generated by $U$, $X_1$, \ldots, $X_m$, $Y_1$, 
\ldots, $Y_m$, is a Lie subalgebra of $\mathfrak g_m$, hence defines a Lie subgroup $H^{2m+1}$ of $G_m$. As observed by the end of item (a), 
$H^{2m+1}$ (as well as all of its lateral classes) is a totally geodesic hypersurface of $G_m$; moreover, it is spacelike (for it is 
oriented by the unit normal timelike vector field $V$), complete (as occurs to all Riemannian homogeneous spaces), and such that the 
Ricci curvature of $G_m$ in the direction of $V$ is $-\frac{m}{4}$.
}
\end{remarks}

\end{document}